\documentclass[12pt,reqno]{article}

\usepackage[usenames]{color}
\usepackage{amssymb}
\usepackage{amsmath}
\usepackage{amsthm}
\usepackage{amsfonts}
\usepackage{amscd}
\usepackage{graphicx}
\usepackage{mathtools}

\mathtoolsset{showonlyrefs}

\usepackage[colorlinks=true,
linkcolor=webgreen,
filecolor=webbrown,
citecolor=webgreen]{hyperref}

\definecolor{webgreen}{rgb}{0,.5,0}
\definecolor{webbrown}{rgb}{.6,0,0}

\usepackage{color}
\usepackage{fullpage}
\usepackage{float}

\usepackage{graphics}
\usepackage{latexsym}

\begin{document}

\theoremstyle{plain}
\newtheorem{theorem}{Theorem}
\newtheorem{corollary}[theorem]{Corollary}
\newtheorem{proposition}{Proposition}
\newtheorem{lemma}{Lemma}
\newtheorem{example}{Example}
\newtheorem{remark}{Remark}


\newcommand{\braces}{\genfrac{\lbrace}{\rbrace}{0pt}{}}

\begin{center}

\vskip 1cm{\large\bf  
On Touchard's Identity: Generalizations and Related Results}

\vskip 1cm
\large
Kunle Adegoke \\ 
Department of Physics and Engineering Physics\\
Obafemi Awolowo University\\
220005 Ile-Ife \\
Nigeria \\
\href{mailto:adegoke00@gmail.com }{\tt adegoke00@gmail.com}\\

\end{center}

\vskip .2 in

\begin{abstract}
Starting with a known polynomial identity, we derive two generalizations of Touchard's identity concerning Catalan numbers; one obtained using the Beta function and the other via a connection with Stirling numbers of the second kind. We subsequently establish several new combinatorial identities. 
\end{abstract}

\noindent 2020 {\it Mathematics Subject Classification}: Primary 05A19; Secondary 05A10. 

\noindent {\it Keywords:} Catalan number, Touchard's identity, Knuth's old sum, harmonic number, Stirling number of the second kind, binomial transform.

\section{Introduction}
Let $C_k=\binom{2k}k/(k+1)$ be the $k$th Catalan number. The celebrated Touchard's identity~\cite{gould77,ratie23,riordan73,shapiro76}:
\begin{equation}\label{touchard}
\sum_{k = 0}^{\left\lfloor {n/2} \right\rfloor } {\binom{n}{{2k}}2^{n - 2k} C_k }  = C_{n + 1}, 
\end{equation}
is subsumed at $x=0$ by the following identity~\cite[Equation (54)]{adegoke26}:
\begin{equation}\label{fe8atkt}
\sum_{k = 0}^{\left\lfloor {n/2} \right\rfloor } {\binom{{n}}{{2k}}2^{ - 2k} C_k \left( {1 - x} \right)^{2k} }=\sum_{k = 0}^n {\binom{{n}}{k}2^{ - k} C_{k + 1} \left( {1 - x} \right)^k x^{n - k} } .
\end{equation}
Based on~\eqref{fe8atkt}, we will derive the following generalization of~\eqref{touchard}:

\begin{equation}\label{gen_touchard}
\sum_{k = 0}^{\left\lfloor {n/2} \right\rfloor } {\binom{{n}}{{2k}}2^{n - 2k} C_k \binom{{2k + r}}{s}^{ - 1} }  = \frac{s}{{n + r}}\sum_{k = 0}^n {\binom{{n}}{k}2^{n - k} C_{k + 1} \binom{{n + r - 1}}{{k + r - s}}^{ - 1} } ,
\end{equation}
which holds for complex numbers $r$ and $s$ that are not negative integers.

Note that~\eqref{gen_touchard} reduces to~\eqref{touchard} when $s=0$, since
\begin{equation}\label{pnfixx7}
\lim_{s\to 0} s\binom{p - 1}{p - s}^{-1} = \lim_{s\to 0} s\,\frac ps\binom p{p-s}^{-1}=p.
\end{equation}
By linking~\eqref{fe8atkt} with Stirling numbers of the second kind, defined for non-negative integers $m$ and $n$ through the generating function
\begin{equation}
\sum_{k = 0}^n {\binom{{n}}{k}\braces{{ m}}{k}k!}  = n^m ,
\end{equation}
and having the explicit representation
\begin{equation}\label{bluqqd7}
\braces{{ m}}{n} = \frac{{( - 1)^n }}{{n!}}\sum_{k = 0}^n {( - 1)^k \binom{{n}}{k}k^m },
\end{equation}
we will derive another generalization of Touchard's identity, namely,
\begin{equation}
\sum_{k = 0}^{\left\lfloor {n/2} \right\rfloor } {\binom{{n}}{{2k}}2^{n - 2k} \left( {n - 2k} \right)^r C_k }  = \sum_{k = 0}^r {2^k \binom{{n}}{k}\braces{{ r}}{k}k!C_{n - k + 1} } ,
\end{equation}
which holds for every non-negative integer $r$.

Identity~\eqref{fe8atkt} has further interesting immediate consequences. Evaluation at $x=2$ produces the following self-inverse sequence binomial-transform identity:
\begin{equation}
\sum_{k = 0}^n {( - 1)^k \binom{{n}}{k}2^{ - 2k} C_{k + 1} }  = 2^{ - 2n} C_{n + 1} ,
\end{equation}
which was reported by Donaghey~\cite{donaghey76}. Setting $x=-1$, and $x=3$, in turn, in~\eqref{fe8atkt}, produces the following interesting identity:
\begin{equation}
\sum_{k = 0}^n {( - 1)^k \binom{{n}}{k}C_{k + 1} }  = ( - 3)^n \sum_{k = 0}^n {( - 1)^k \binom{{n}}{k}3^{ - k} C_{k + 1} }  = ( - 1)^n \sum_{k = 0}^n {\binom{{n}}{{2k}}C_k } .
\end{equation}

Binomial coefficients are defined, for non-negative integers $i$ and $j$, by
\begin{equation}
\binom ij=
\begin{cases}
\dfrac{{i!}}{{j!(i - j)!}}, & \text{$i \ge j$};\\
0, & \text{$i<j$}.
\end{cases}
\end{equation}
The definition is extended to complex numbers $r$ and $s$ by
\begin{equation}\label{y89d722}
\binom rs= \frac{{\Gamma (r + 1)}}{{\Gamma (s + 1)\Gamma (r - s + 1)}},
\end{equation}
where the Gamma function, $\Gamma(z)$, is defined for $\Re(z)>0$ by
\begin{equation}
\Gamma (z) = \int_0^\infty  {e^{ - t} t^{z - 1}dt}  = \int_0^\infty  {\left( {\log (1/t)} \right)^{z - 1}dt},
\end{equation}
and is extended to the rest of the complex plane, excluding the non-positive integers, by analytic continuation.

Relationships between binomial coefficients can be established by using~\eqref{y89d722}, together with the following identities:
\begin{equation}\label{v7g54vl}
\Gamma \left( {z + \frac{1}{2}} \right) = \sqrt \pi\, 2^{-2z}\binom{{2z}}{z}\,\Gamma \left( {z + 1} \right),
\end{equation}
and
\begin{equation}\label{ho6212z}
\Gamma \left( { - z + \frac{1}{2}} \right) = ( - 1)^z\, 2^{2z} \,\binom{{2z}}{z}^{ - 1} \frac{{\sqrt \pi  }}{{\Gamma \left( {z + 1} \right)}}.
\end{equation}

Harmonic numbers, $H_n$, and odd harmonic numbers, $O_n$, are defined for non-negative integers $n$ by
\begin{equation}
H_n=\sum_{k=1}^n\frac1k,\quad O_n=\sum_{k=1}^n\frac1{2k-1},\quad H_0=0=O_0.
\end{equation}
We will make frequent use of the following identity:
\begin{equation}\label{nerd5x1}
H_{n-1/2}=2O_n-2\ln 2,
\end{equation}
which extends harmonic numbers to half-integer arguments and which is a consequence of the link between harmonic numbers, odd harmonic numbers and the digamma function.

\section{A generalization of Touchard's identity and related results}\label{touch}
Our first generalization of Touchard's identity~\eqref{touchard} is stated in Theorem~\ref{thm.touchard}. We require the integration formulas stated in Lemma~\ref{integrals}, which are variations on the well-known Beta function.
\begin{lemma}\label{integrals}
If $r$ and $s$ are complex numbers, then
\begin{equation}
\int_0^1 {y^{r + k - s} \left( {1 - y} \right)^{s - 1} dy}  = \frac{1}{s}\binom{{k + r}}{s}^{ - 1} 
\end{equation}
and
\begin{equation}
\int_0^1 {y^{s + k} \left( {1 - y} \right)^{r - k} dy}  = \frac{1}{{r + s + 1}}\binom{{r + s}}{{k + s}}^{ - 1} .
\end{equation}
\end{lemma}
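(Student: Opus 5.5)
Both formulas are instances of the Euler Beta integral
\begin{equation}
B(a,b)=\int_0^1 y^{a-1}(1-y)^{b-1}\,dy=\frac{\Gamma(a)\Gamma(b)}{\Gamma(a+b)},
\end{equation}
valid for $\Re a>0$ and $\Re b>0$. The plan is to identify $a$ and $b$ in each case, write the Gamma quotient out, and recognise it as a reciprocal binomial coefficient via the Gamma-function definition~\eqref{y89d722}. The hypotheses implicitly require the integrals to converge, so I will state the convergence conditions: $\Re(r+k-s)>-1$ and $\Re s>0$ for the first integral, and $\Re(s+k)>-1$ and $\Re(r-k)>-1$ for the second. The identities then hold on those domains, and elsewhere by analytic continuation of the right-hand sides.

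For the first integral, take $a=r+k-s+1$ and $b=s$, so that $a+b=k+r+1$. The integral equals
\begin{equation}
\frac{\Gamma(k+r-s+1)\,\Gamma(s)}{\Gamma(k+r+1)}.
\end{equation}
Using $\Gamma(s)=\Gamma(s+1)/s$, this becomes
\begin{equation}
\frac1s\cdot\frac{\Gamma(s+1)\,\Gamma(k+r-s+1)}{\Gamma(k+r+1)}=\frac1s\binom{k+r}{s}^{-1}
\end{equation}
by~\eqref{y89d722}.

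For the second integral, take $a=s+k+1$ and $b=r-k+1$, so that $a+b=r+s+2$. The integral equals
\begin{equation}
\frac{\Gamma(s+k+1)\,\Gamma(r-k+1)}{\Gamma(r+s+2)}.
\end{equation}
Writing $\Gamma(r+s+2)=(r+s+1)\Gamma(r+s+1)$ and noting that $(r+s)-(k+s)=r-k$, this equals
\begin{equation}
\frac{1}{r+s+1}\binom{r+s}{k+s}^{-1}.
\end{equation}

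There is no real obstacle here. The only point needing care is the bookkeeping of the shifts by one between the Beta parameters and the binomial arguments, and recording the parameter range on which the integrals converge.
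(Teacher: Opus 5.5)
Your proposal is correct and follows the same route the paper indicates (the paper only says the formulas are variations on the Beta function): identify each integral as $B(a,b)=\Gamma(a)\Gamma(b)/\Gamma(a+b)$ and rewrite it via the Gamma-function definition of the binomial coefficient, and your parameter shifts check out. Stating the convergence conditions $\Re s>0$, $\Re(r+k-s)>-1$ and $\Re(s+k)>-1$, $\Re(r-k)>-1$ is a worthwhile addition, since the paper states the lemma for arbitrary complex $r,s$ without them.
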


\begin{theorem}\label{thm.touchard}
If $r$ and $s$ are complex numbers that are not negative integers, then
\begin{equation}
\sum_{k = 0}^{\left\lfloor {n/2} \right\rfloor } {\binom{{n}}{{2k}}2^{- 2k} C_k \binom{{2k + r}}{s}^{ - 1} }  = \frac{s}{{n + r}}\sum_{k = 0}^n {\binom{{n}}{k}2^{ - k} C_{k + 1} \binom{{n + r - 1}}{{k + r - s}}^{ - 1} }.
\end{equation}
\end{theorem}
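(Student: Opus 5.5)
Starting from identity \eqref{fe8atkt}, the idea is to substitute $x=1-y$, so that $1-x=y$. This gives
\begin{equation}
\sum_{k=0}^{\lfloor n/2\rfloor}\binom{n}{2k}2^{-2k}C_k\,y^{2k}=\sum_{k=0}^n\binom nk 2^{-k}C_{k+1}\,y^k(1-y)^{n-k}.
\end{equation}
Both sides are polynomials in $y$. We then multiply by the weight $y^{r-s}(1-y)^{s-1}$ and integrate over $[0,1]$; this is exactly the shape of kernel that Lemma~\ref{integrals} evaluates. We first assume $\Re s>0$ and $\Re(r-s)>-1$, so that all integrals converge, and remove these restrictions at the end.

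On the left, each term becomes $\int_0^1 y^{r+2k-s}(1-y)^{s-1}\,dy$. By the first formula of Lemma~\ref{integrals} with $k$ replaced by $2k$, this equals $\frac1s\binom{2k+r}{s}^{-1}$. The left side is therefore $\frac1s$ times the left side of Theorem~\ref{thm.touchard}.

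On the right, each term becomes $\int_0^1 y^{k+r-s}(1-y)^{n-k+s-1}\,dy$. Evaluating this as a Beta integral gives
\begin{equation}
\frac{\Gamma(k+r-s+1)\,\Gamma(n-k+s)}{\Gamma(n+r+1)}=\frac{1}{n+r}\binom{n+r-1}{k+r-s}^{-1},
\end{equation}
using $\Gamma(n+r+1)=(n+r)\Gamma(n+r)$ and definition \eqref{y89d722}. This is the second formula of Lemma~\ref{integrals} with $s\mapsto r-s$ and $r\mapsto n+s-1$: there the total is $(n+s-1)+(r-s)=n+r-1$, so the prefactor is $\frac1{n+r}$, as required. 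Multiplying both sides by $s$ then yields the theorem.

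The main obstacle is the extension to all complex $r,s$ that are not negative integers, since the integral representation only holds on the convergence region. Both sides are finite sums of ratios of Gamma functions, hence meromorphic in $(r,s)$. They agree on an open set, so by the identity theorem they agree wherever both are defined. It remains to check that the stated exclusions suffice. Reciprocal binomials such as $\binom{2k+r}{s}^{-1}=\Gamma(s+1)\Gamma(2k+r-s+1)/\Gamma(2k+r+1)$ have poles only when $s$ or $r-s$ hits certain negative integers. On the right, the factor $s/(n+r)$ combines with the Gamma quotient, and any apparent singularity at $n+r=0$ is cancelled. I would handle this carefully via the Gamma form rather than the binomial form, and note that the boundary case $s=0$ is to be interpreted as a limit, as in \eqref{pnfixx7}.
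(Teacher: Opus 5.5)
Your proposal is correct and is essentially the paper's proof: substitute $x\mapsto 1-x$ in \eqref{fe8atkt}, multiply by $y^{r-s}(1-y)^{s-1}$, and integrate term-wise using the two Beta-type formulas of Lemma~\ref{integrals}. The paper does not state the convergence region ($\Re s>0$, $\Re(r-s)>-1$) or the meromorphic-continuation step, so your treatment of these points is a refinement of the same argument rather than a different route; this includes reading the result as an identity of meromorphic functions and handling $s=0$ and $n+r=0$ through the Gamma form.
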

\begin{proof}
Using~\eqref{fe8atkt}, write
\begin{align}
\sum_{k = 0}^{\left\lfloor {n/2} \right\rfloor } {\binom{{n}}{{2k}}2^{ - 2k} C_k x^{r - s + 2k} \left( {1 - x} \right)^{s - 1} }  = \sum_{k = 0}^n {\binom{{n}}{k}2^{ - k} C_{k + 1} x^{k + r - s} \left( {1 - x} \right)^{n - k + s - 1} } ,
\end{align}
and integrate term-wise from $0$ to $1$, using Lemma~\ref{integrals}.
\end{proof}
The rest of this section is devoted to deriving some consequences of~\eqref{gen_touchard}.
\begin{proposition}
If $n$ is a non-negative integer, then
\begin{align}\label{v2skdnk}
\sum_{k = 0}^{\left\lfloor {n/2} \right\rfloor } {\binom{{n}}{{2k}}2^{ - 6k} C_k C_{2k} }  = \frac{1}{{\left( {n + 1} \right)2^{2n} }}\sum_{k = 0}^n { \left( {2\left( {n - k} \right) + 1} \right)\binom{{2\left( {n - k} \right)}}{{n - k}}\binom{{2k}}{k}2^{ - k}C_{k + 1} }
\end{align}
and
\begin{equation}\label{ymbrlhu}
\sum_{k = 0}^{\left\lfloor {n/2} \right\rfloor } {\binom{{n}}{{2k}}2^{2k} \frac{{C_k }}{{C_{2k} }}}  = 3\binom{{2n}}{n}^{ - 1} \sum_{k = 0}^n {\frac{{2^k \left( {k + 1} \right)}}{{\left( {2\left(n - k\right) - 1} \right)\left( {2\left(n - k\right) - 3} \right)}}\binom{{2\left( {n - k} \right)}}{{n - k}}C_{k + 1} } .
\end{equation}
\end{proposition}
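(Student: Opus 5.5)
The plan is to obtain both identities as special cases of Theorem~\ref{thm.touchard}. In each case we choose $r$ and $s$ to be half-integers, so that the inverse binomial coefficient on the left becomes a multiple of $2^{\mp 4k}\binom{4k}{2k}^{\pm1}$, which we then convert into $C_{2k}$. The inverse binomial coefficients on the right will reduce, through \eqref{y89d722}, \eqref{v7g54vl} and \eqref{ho6212z}, to products of central binomial coefficients. Throughout we write
\begin{equation}
\binom{2k+r}{s}^{-1}=\frac{\Gamma(s+1)\,\Gamma(2k+r-s+1)}{\Gamma(2k+r+1)}
\end{equation}
and use $\Gamma(3/2)=\sqrt\pi/2$, $\Gamma(5/2)=3\sqrt\pi/4$ and $\Gamma(-1/2)=-2\sqrt\pi$.

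For \eqref{v2skdnk} we take $r=1$ and $s=3/2$. Then
\begin{equation}
\binom{2k+1}{3/2}^{-1}=\frac{\Gamma(5/2)\,\Gamma(2k+1/2)}{\Gamma(2k+2)},
\end{equation}
and applying \eqref{v7g54vl} with $z=2k$ gives $\tfrac{3\pi}{4}\,2^{-4k}\binom{4k}{2k}/(2k+1)=\tfrac{3\pi}{4}\,2^{-4k}C_{2k}$. The left side of Theorem~\ref{thm.touchard} is therefore $\tfrac{3\pi}{4}\sum\binom{n}{2k}2^{-6k}C_kC_{2k}$. On the right, the coefficient $\binom{n}{k-1/2}^{-1}=\Gamma(k+1/2)\Gamma(n-k+1/2)/\Gamma(n+1)$ is handled by \eqref{v7g54vl} applied to both Gamma factors. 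This yields $\pi\,2^{-2n}\binom{2k}{k}\binom{2(n-k)}{n-k}\,k!\,(n-k)!/n!$, and the factor $k!(n-k)!/n!$ cancels $\binom nk$. After dividing out the common factor $\pi$, what remains is to match the constants and the factor $(2(n-k)+1)$ appearing in \eqref{v2skdnk}.

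My quick bookkeeping does not yet produce that factor, so I expect to adjust the parameters. The candidates are shifting $r$ by $1$ (for example $r=0$ or $r=2$ with the corresponding $s$), or reversing the summation $k\to n-k$ on one side. The aim is that $\Gamma(n-k+3/2)=(n-k+\tfrac12)\Gamma(n-k+\tfrac12)$ appears on the right, which supplies exactly $(2(n-k)+1)/2$. This parameter tuning, together with the constant bookkeeping, is the main obstacle; everything else is mechanical.

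For \eqref{ymbrlhu} we need $2^{4k}/C_{2k}$ on the left. Taking $r=-1/2$ and $s=-3/2$ gives
\begin{equation}
\binom{2k-1/2}{-3/2}^{-1}=\frac{\Gamma(-1/2)\,\Gamma(2k+2)}{\Gamma(2k+1/2)}=-2\cdot 2^{4k}\,\frac{2k+1}{\binom{4k}{2k}}=-2\,\frac{2^{4k}}{C_{2k}},
\end{equation}
so the left side becomes $-2\sum\binom n{2k}2^{2k}C_k/C_{2k}$. On the right the prefactor is $s/(n+r)=-3/(2n-1)$, and the coefficient is
\begin{equation}
\binom{n-3/2}{k+1}^{-1}=\frac{\Gamma(k+2)\,\Gamma(n-k-3/2)}{\Gamma(n-1/2)}.
\end{equation}
We write $\Gamma(n-k-3/2)=\Gamma(n-k+1/2)/\bigl((n-k-\tfrac12)(n-k-\tfrac32)\bigr)$, which produces the denominator $(2(n-k)-1)(2(n-k)-3)$. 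We then apply \eqref{v7g54vl} to $\Gamma(n-k+1/2)$ and to $\Gamma(n-1/2)=\Gamma(n+1/2)/(n-\tfrac12)$; this cancels the prefactor $1/(2n-1)$ and produces $\binom{2n}{n}^{-1}$. The factor $(k+1)!\,(n-k)!/n!$ combines with $\binom nk$ to leave $(k+1)$, and the powers of $2$ collapse to $2^k$ after the factor $2^{-k}$ already present in the summand is included. Comparing constants then gives the factor $3$ in \eqref{ymbrlhu}. Here I would again verify signs and powers of $2$ by checking the case $n=0$ or $n=1$.
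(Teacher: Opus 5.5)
\textbf{Second identity.} This part is correct and matches the paper. With $r=-1/2$, $s=-3/2$ the left side of Theorem~\ref{thm.touchard} becomes $-2\sum_k\binom{n}{2k}2^{2k}C_k/C_{2k}$. The right side becomes $-6\binom{2n}{n}^{-1}\sum_k\cdots$, and the factor $3$ follows.

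\textbf{First identity.} This part is not established as written, and the repair you propose points the wrong way. Your original choice $r=1$, $s=3/2$ is already the right one; it is exactly the paper's. The missing factor comes from a slip in the right-hand coefficient. By \eqref{y89d722}, with upper index $n$ and lower index $k-\frac12$, the second Gamma factor is $\Gamma\bigl(n-(k-\frac12)+1\bigr)=\Gamma(n-k+\frac32)$, not $\Gamma(n-k+\frac12)$. Hence
\[
\binom{n}{k-1/2}^{-1}=\frac{\Gamma(k+\frac12)\,\Gamma(n-k+\frac32)}{\Gamma(n+1)}=\frac{2(n-k)+1}{2}\,\pi\,2^{-2n}\binom{2k}{k}\binom{2(n-k)}{n-k}\binom{n}{k}^{-1}.
\]
So the factor $\Gamma(n-k+\frac32)=(n-k+\frac12)\Gamma(n-k+\frac12)$ that you hoped to manufacture is already present.

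The right side of Theorem~\ref{thm.touchard} then equals
\[
\frac{3}{2(n+1)}\cdot\frac{\pi}{2}\,2^{-2n}\sum_k(2(n-k)+1)\binom{2(n-k)}{n-k}\binom{2k}{k}2^{-k}C_{k+1}.
\]
Dividing by the left-hand factor $\frac{3\pi}{4}$ gives \eqref{v2skdnk} exactly. The same correction also removes the factor-of-$2$ discrepancy in your constant.

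\textbf{Why your proposed fixes fail.} Changing $r$ or $s$ changes the left-hand coefficient $\binom{2k+r}{s}^{-1}$, so it is no longer $\frac{3\pi}{4}2^{-4k}C_{2k}$. For instance, $r=0$ gives a multiple of $\Gamma(2k-\frac12)/\Gamma(2k+1)$. Reindexing $k\to n-k$ only relabels terms and cannot create a factor $2(n-k)+1$.
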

\begin{proof}
To derive~\eqref{v2skdnk}, set $r=1$ and $s=3/2$ in~\eqref{gen_touchard} and use
\begin{equation}\label{n86netd}
\binom{{2k + 1}}{{3/2}} = \frac{4}{{3\sqrt \pi  }}\frac{{\Gamma \left( {2k + 2} \right)}}{{\Gamma \left( {2k + 1/2} \right)}} =\frac{4}{{3\pi }}\frac{{2^{4k} }}{{C_{2k} }}
\end{equation}
and
\begin{equation}\label{z926t63}
\binom{{n}}{{k - 1/2}}  = \frac{{\Gamma \left( {n + 1} \right)}}{{\Gamma \left( {k + \frac{1}{2}} \right)\Gamma \left( {n - k + \frac{3}{2}} \right)}}
= \frac{{2^{2n + 1} }}{{\pi \left( {2\left( {n - k} \right) + 1} \right)}}\binom{{n}}{k}\binom{{2\left( {n - k} \right)}}{{n - k}}^{ - 1} \binom{{2k}}{k}^{ - 1} .
\end{equation}
Identity~\eqref{ymbrlhu} is obtained by setting $r=-1/2$ and $s=-3/2$ in~\eqref{gen_touchard} and using
\begin{equation}
\binom{{2k - 1/2}}{{ - 3/2}} =  - 2^{ - 4k - 1} C_{2k} 
\end{equation}
and
\begin{equation}
\binom{{n - 3/2}}{{k + 1}} = \frac{{2^{ - 2k} }}{2}\frac{{\left( {2\left( {n - k} \right) - 1} \right)\left( {2\left( {n - k} \right) - 3} \right)}}{{\left( {2n - 1} \right)\left( {k + 1} \right)}}\binom{{2n}}{n}\binom{{n}}{k}\binom{{2\left( {n - k} \right)}}{{n - k}}.
\end{equation}
\end{proof}

\begin{proposition}
If $n$ is a non-negative integer and $r$ is a complex number that is not a non-positive integer, then
\begin{equation}\label{t7drac2}
\sum_{k = 0}^{\left\lfloor {n/2} \right\rfloor } {\binom{{n}}{{2k}}\frac{{2^{ - 2k}  }}{{2k + r}}\,C_k}  = \frac{{1}}{{n + r}}\binom{{n + r - 1}}{{n}}^{ - 1}\sum_{k = 0}^n {2^{ - k} C_{k + 1} \binom{{k + r - 1}}{k}} .
\end{equation}
In particular,
\begin{align}
\sum_{k = 0}^{\left\lfloor {n/2} \right\rfloor } {\binom{{n}}{{2k}}\frac{{2^{ - 2k} }}{{2k + 1}}}\, C_k  &= \frac{1}{{n + 1}}\sum_{k = 0}^n {2^{ - k} C_{k + 1} }\label{u7r6uj7},\\
\sum_{k = 0}^{\left\lfloor {n/2} \right\rfloor } {\binom{{n}}{{2k}}\frac{{2^{ - 2k} }}{{4k + 1}}}\, C_k  &= \frac{{2^{2n} }}{{2n + 1}}\binom{{2n}}{n}^{ - 1} \sum_{k = 0}^n {2^{ - 3k} \binom{{2k}}{k}C_{k + 1} }\label{c3ntdo9}, \\
\sum_{k = 0}^{\left\lfloor {n/2} \right\rfloor } {\binom{{n}}{{2k}}\frac{{2^{ - 2k} }}{{4k - 1}}\,C_k }  &= 2^{2n} \binom{{2n}}{n}^{ - 1} \left( {2\sum_{k = 1}^n {2^{ - 3k} C_{k - 1} C_{k + 1} }  - 1} \right)\label{bld3mys},
\end{align}
and
\begin{equation}\label{ou4dvmp}
\sum_{k = 0}^{\left\lfloor {n/2} \right\rfloor } {\binom{{n}}{{2k}}\frac{{2^{ - 2k} }}{{2k - 1}}\,C_k }  = n\left( { - 1 + \sum_{k = 2}^n {2^{ - k}\, \frac{{C_{k + 1} }}{{k\left( {k - 1} \right)}}} } \right).
\end{equation}
\end{proposition}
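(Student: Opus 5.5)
All five identities come from Theorem~\ref{thm.touchard} with $s=1$. There $\binom{2k+r}{1}^{-1}=1/(2k+r)$, so the left side is exactly the left side of \eqref{t7drac2}. The right side becomes
\[
\frac{1}{n+r}\sum_{k=0}^n \binom{n}{k}2^{-k}C_{k+1}\binom{n+r-1}{k+r-1}^{-1}.
\]
I simplify the binomial ratio with the Gamma definition \eqref{y89d722}. The factor $\Gamma(n-k+1)$ cancels against $(n-k)!$, giving
\[
\binom{n}{k}\binom{n+r-1}{k+r-1}^{-1}=\frac{n!\,\Gamma(k+r)}{k!\,\Gamma(n+r)}=\binom{k+r-1}{k}\binom{n+r-1}{n}^{-1}.
\]
This yields \eqref{t7drac2} directly. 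The hypothesis that $r$ is not a non-positive integer keeps $\Gamma(r)$ and $\Gamma(k+r)$ finite. It also makes the division by $n+r$ and the use of the theorem legitimate.

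The special cases are specialisations of $r$.
\begin{itemize}
\item For $r=1$, both $\binom{k}{k}$ and $\binom{n}{n}$ equal $1$, which gives \eqref{u7r6uj7}.
\item For $r=1/2$, I use $\binom{k-1/2}{k}=2^{-2k}\binom{2k}{k}$, which follows from \eqref{v7g54vl}. The left side is then $2\sum\binom{n}{2k}2^{-2k}C_k/(4k+1)$. The right side is $\frac{2}{2n+1}2^{2n}\binom{2n}{n}^{-1}\sum 2^{-3k}\binom{2k}{k}C_{k+1}$. Halving gives \eqref{c3ntdo9}.
\item For $r=-1/2$, I use $\binom{k-3/2}{k}=-2^{-2k}\binom{2k}{k}/(2k-1)$. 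The prefactor becomes $\frac{2}{2n-1}\cdot\bigl(-(2n-1)2^{2n}\binom{2n}{n}^{-1}\bigr)$. The $k=0$ summand contributes $-1$, since $C_1=1$. For $k\ge1$, the identity $\binom{2k}{k}=\frac{2(2k-1)}{k}\binom{2k-2}{k-1}$ gives $\binom{2k}{k}/(2k-1)=2C_{k-1}$. After halving and collecting the signs, this is \eqref{bld3mys}.
\end{itemize}

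The main obstacle is \eqref{ou4dvmp}. It corresponds to $r=-1$, which the statement excludes, so I obtain it as the limit $r\to-1$ of \eqref{t7drac2}. The left side is continuous at $r=-1$, because $2k-1\neq0$. On the right I use the unsimplified form $\frac{1}{n+r}\cdot\frac{n!\,\Gamma(k+r)}{k!\,\Gamma(n+r)}$.

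For $k\ge2$ this tends to
\[
\frac{1}{n-1}\cdot\frac{n!\,(k-2)!}{k!\,(n-2)!}=\frac{n}{k(k-1)},
\]
which gives the sum over $k\ge2$ (for $n\ge2$). The $k=0$ and $k=1$ terms each blow up, but they have the same weight, since $2^{0}C_1=2^{-1}C_2=1$. Together they give
\[
\frac{n!}{(n+r)\Gamma(n+r)}\bigl(\Gamma(r)+\Gamma(r+1)\bigr)=\frac{n!}{(n+r)\Gamma(n+r)}\cdot\frac{\Gamma(r+2)}{r}.
\]
This tends to $n\cdot(-1)=-n$. I would handle the small cases $n=0,1$, where $\Gamma(n+r)$ itself has a pole, by checking them directly or by continuity in $r$. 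This gives \eqref{ou4dvmp}.
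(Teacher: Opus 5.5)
Your route is the paper's. You set $s=1$ in Theorem~\ref{thm.touchard}, rewrite $\binom{n}{k}\binom{n+r-1}{k+r-1}^{-1}$ as $\binom{k+r-1}{k}\binom{n+r-1}{n}^{-1}$, and specialise to $r=1,\tfrac12,-\tfrac12$. You then obtain \eqref{ou4dvmp} as the limit $r\to-1$ after merging the $k=0$ and $k=1$ summands; your identity $\Gamma(r)+\Gamma(r+1)=\Gamma(r+2)/r$ is the paper's factor $1+r$. Your computations for \eqref{t7drac2}, \eqref{u7r6uj7}, \eqref{c3ntdo9}, \eqref{bld3mys}, and for \eqref{ou4dvmp} with $n\ge1$, are correct.

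The step that fails is the promised direct check of $n=0$ in \eqref{ou4dvmp}. At $n=0$ the left side is $\binom{0}{0}2^{0}C_0/(2\cdot0-1)=-1$, while the right side is $0\cdot(-1)=0$. Merging the $k=0$ and $k=1$ terms requires a $k=1$ term, i.e.\ $n\ge1$. At $n=0$, \eqref{t7drac2} reduces to $1/r$, which tends to $-1$, not $0$. So \eqref{ou4dvmp} is true only for $n\ge1$. The paper's proof carries the same unstated restriction. You should state this restriction instead of asserting that the case can be verified.

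Conversely, $n=1$ needs no separate treatment. Your merged factor is $\frac{n!}{\Gamma(n+r+1)}\cdot\frac{\Gamma(r+2)}{r}$, which is regular at $r=-1$ for every $n\ge1$ and tends to $-n$ directly. For $n=1$ the sum over $k\ge2$ is empty, so the result is $-1$, as required.
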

\begin{proof}
Set $s=1$ in~\eqref{gen_touchard} and use
\begin{equation}\label{j5c96vt}
\binom{{n + r - 1}}{{k + r - 1}}\binom{{k + r - 1}}{k} = \binom{{n}}{k}\binom{{n + r - 1}}{n}.
\end{equation}
Identity~\eqref{u7r6uj7} corresponds to setting $r=1$ in~\eqref{t7drac2} while Identity~\eqref{c3ntdo9} is obtained by evaluating~\eqref{t7drac2} at $r=1/2$ and making use of
\begin{equation}\label{p1jsq1n}
\binom{{k - 1/2}}{k} = 2^{ - 2k} \binom{{2k}}{k}.
\end{equation}
Identity~\eqref{bld3mys} is derived by shifting the summation index on the right-hand side of~\eqref{t7drac2}, setting $r=-1/2$ and using
\begin{equation}
\binom{{k - 1/2}}{{k + 1}} =  - \frac{{C_k }}{{2^{2k + 1} }}.
\end{equation}
To prove~\eqref{ou4dvmp}, write~\eqref{t7drac2} as
\begin{align}
&\sum_{k = 0}^{\left\lfloor {n/2} \right\rfloor } {\binom{{n}}{{2k}}\frac{{2^{ - 2k} }}{{2k + r}}\,C_k }\\
&\qquad  = \frac{1}{{n + r}}\left( {\left( {1 + r} \right)\binom{{n + r - 1}}{n}^{ - 1}  + \sum_{k = 2}^n {2^{ - k} C_{k + 1} \binom{{k + r - 1}}{k}\binom{{n + r - 1}}{n}^{ - 1} } } \right),
\end{align}
and use
\begin{equation}
\lim_{r\to -1}\left( {1 + r} \right)\binom{{n + r - 1}}{n}^{ - 1}  =  - n\left( {n - 1} \right)
\end{equation}
and
\begin{equation}
\lim_{r\to -1}\binom{{k + r - 1}}{k}\binom{{n + r - 1}}{n}^{ - 1}  = \frac{{n\left( {n - 1} \right)}}{{k\left( {k - 1} \right)}},
\end{equation}
since
\begin{equation}
\left( {1 + r} \right)\binom{{n + r - 1}}{n}^{ - 1}  = \frac{{\left( {n + r} \right)\left( {n + r + 1} \right)}}{r}\binom{{n + r + 1}}{n}^{ - 1} 
\end{equation}
and
\begin{equation}
\binom{{k + r - 1}}{k}\binom{{n + r - 1}}{n}^{ - 1}  = \frac{{n + r}}{{k + r}}\binom{{n}}{{n + r}}\binom{{k}}{{k + r}}^{ - 1} .
\end{equation}
\end{proof}

\begin{proposition}
If $n$ is a non-negative integer and $s$ is a complex number that is not a negative integer, then
\begin{equation}\label{k7vgfov}
\sum_{k = 0}^{\left\lfloor {n/2} \right\rfloor } {\binom{{n}}{{2k}}} 2^{ - 2k} C_k \binom{{2k + s}}{s}^{ - 1}  = \frac{s}{{n + s}}\sum_{k = 0}^n {\binom{{n}}{k}2^{ - k} C_{k + 1} \binom{{n - 1 + s}}{k}^{ - 1} } .
\end{equation}
In particular,
\begin{align}
&\sum_{k = 0}^{\left\lfloor {n/2} \right\rfloor } {\binom{{n}}{{2k}}} 2^{2k} C_k \binom{{4k}}{{2k}}^{ - 1}\\  &\qquad= \frac{2^n3n}{{\left( {n + 1} \right)\left( {n + 2} \right)}} - \sum_{k = 0}^{n - 2} {\frac{{2^k }}{{2n - 2k - 1}}\binom{{n}}{k}^2 C_{k + 1}\binom{2k}k^{-1}\binom{2n}{2k}^{-1} }\label{e5hxpsc} .
\end{align}
\end{proposition}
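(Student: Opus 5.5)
The general identity~\eqref{k7vgfov} is the diagonal case $r=s$ of Theorem~\ref{thm.touchard}, so I would start there. With $r=s$, the left-hand binomial becomes $\binom{2k+s}{s}^{-1}$. On the right, the prefactor becomes $s/(n+s)$ and the binomial becomes $\binom{n+s-1}{k+s-s}^{-1}=\binom{n-1+s}{k}^{-1}$. That is exactly~\eqref{k7vgfov}. The hypothesis that $s$ is not a negative integer is inherited from the theorem.

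For the particular case~\eqref{e5hxpsc}, I would evaluate~\eqref{k7vgfov} at $s=-1/2$. On the left I would use~\eqref{y89d722} together with~\eqref{v7g54vl} (applied with $z=2k$) to get
\[
\binom{2k-1/2}{-1/2}=\frac{\Gamma(2k+1/2)}{\Gamma(1/2)\Gamma(2k+1)}=2^{-4k}\binom{4k}{2k}.
\]
The left side then becomes $\sum\binom{n}{2k}2^{2k}C_k\binom{4k}{2k}^{-1}$, as required.

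On the right the prefactor is $\frac{-1/2}{n-1/2}=-\frac{1}{2n-1}$, and I must evaluate $\binom{n-3/2}{k}^{-1}=\Gamma(k+1)\Gamma(n-k-1/2)/\Gamma(n-1/2)$. For $0\le k\le n-1$, I would write $\Gamma(n-k-1/2)=\Gamma\bigl((n-k-1)+1/2\bigr)$ and $\Gamma(n-1/2)$ via~\eqref{v7g54vl}. This should give
\[
\binom{n-3/2}{k}^{-1}=2^{2k}\,\frac{k!\,(n-k-1)!\,\binom{2n-2k-2}{n-k-1}}{(n-1)!\,\binom{2n-2}{n-1}},
\]
which I would then recast in terms of $\binom{2k}{k}$, $\binom{2n}{2k}$ and $\binom{n}{k}$. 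I expect the summand to reorganize into $\frac{2^k}{2n-2k-1}\binom{n}{k}^2C_{k+1}\binom{2k}{k}^{-1}\binom{2n}{2k}^{-1}$ up to an overall factor $2^n$ and the sign. I would check this by comparing ratios of consecutive terms, or small $n$, rather than by brute expansion.

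For $k=n$ the argument $\Gamma(-1/2)$ appears, and I would handle it with~\eqref{ho6212z} at $z=1$. The terms $k=n-1$ and $k=n$ behave differently in sign: the factor $2n-2k-1$ equals $1$ and $-1$ there. So I would split these two terms off and keep the sum only over $0\le k\le n-2$.

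The main obstacle is showing that these two boundary terms combine to $\frac{3n\,2^n}{(n+1)(n+2)}$. They involve $C_n$ and $C_{n+1}$ times $\binom{2n}{n}$-type factors coming from the Gamma quotients. I would simplify each term using $C_{n+1}=\frac{2(2n+1)}{n+2}C_n$ and $C_n=\binom{2n}{n}/(n+1)$, expecting the central binomial coefficients to cancel and leave the rational closed form. Finally I would multiply through by the normalization $2^n$ to match the displayed form. If the bookkeeping of powers of $2$ fails to match, I would verify with $n=1,2$ and adjust.
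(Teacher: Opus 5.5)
Your route is the paper's: set $r=s$ in Theorem~\ref{thm.touchard}, then $s=-1/2$, evaluate the two binomials through Gamma values, and peel off the terms $k=n-1$ and $k=n$. Your formula for $\binom{n-3/2}{k}^{-1}$, $0\le k\le n-1$, is correct. One bookkeeping expectation is wrong, though, and your closing step \emph{multiply through by the normalization $2^n$} would produce a false identity, so drop it.

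There is no overall factor $2^n$. Include the prefactor $-1/(2n-1)$. Then for $0\le k\le n-1$ the $k$th term on the right of \eqref{k7vgfov} at $s=-1/2$ is exactly
\[
-\frac{2^k}{2n-2k-1}\binom{n}{k}^2C_{k+1}\binom{2k}{k}^{-1}\binom{2n}{2k}^{-1}.
\]
The $2^n$ in \eqref{e5hxpsc} comes only from the two boundary terms:
\begin{itemize}
\item The $k=n-1$ term equals $-2^n/(n+1)$.
\item For the $k=n$ term, $\Gamma(-1/2)=-2\sqrt\pi$ gives $\binom{n-3/2}{n}=-2^{1-2n}C_{n-1}$, so the term equals $2^{n+1}(2n+1)/((n+1)(n+2))$.
\end{itemize}
Together they give $3n\,2^n/((n+1)(n+2))$, as required.

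Your stated reason for the split is also off. At $k=n-1$ the factor $2n-2k-1=1$ has the same sign as every earlier term. The paper splits because the form it uses, $\binom{n-3/2}{k}=2^{-2k}\binom{2n-3}{2k}\binom{2k}{k}\binom{n-2}{k}^{-1}$, only makes sense for $k\le n-2$. With your Gamma expression the split is merely cosmetic. In fact the same closed form above, with its minus sign, also reproduces the $k=n$ term, so the result holds as a single sum over $0\le k\le n$.

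Finally, splitting off two terms needs $n\ge 1$. At $n=0$ the displayed particular case is false: the left side is $1$ and the right side is $0$.
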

\begin{proof}
Set $r=s$ in~\eqref{gen_touchard}. To derive~\eqref{e5hxpsc}, set $s=-1/2$ in~\eqref{k7vgfov}, write
\begin{align}
\sum_{k = 0}^n {\binom{{n}}{k}2^{ - k} C_{k + 1} \binom{{n - 2 + 1/2}}{k}^{ - 1} }  &= \sum_{k = 0}^{n - 2} {\binom{{n}}{k}2^{ - k} C_{k + 1} \binom{{n - 2 + 1/2}}{k}^{ - 1} }\\
&\qquad  + \binom{{n}}{{n - 1}}2^{ - (n - 1)} C_n \binom{{n - 2 + 1/2}}{{n - 1}}^{-1}\\
&\qquad\qquad + 2^{ - n} C_{n + 1} \binom{{n - 2 + 1/2}}{n}^{-1}
\end{align}
and use
\begin{equation}
\binom{{n - 2 + 1/2}}{k} = 2^{ - 2k} \binom{{2n - 3}}{{2k}}\binom{{2k}}{k}\binom{{n - 2}}{k}^{ - 1} 
\end{equation}
and
\begin{equation}
\binom{{2k - 1/2}}{{ - 1/2}} = \binom{{2k - 1/2}}{{2k}} = 2^{ - 4k} \binom{{4k}}{{2k}}.
\end{equation}
\end{proof}

\begin{proposition}
If $n$ and $r$ are non-negative integers and $s$ is a complex number such that $s-n$ is not a negative integer, then
\begin{equation}\label{z5mk6gd}
\sum_{k = 0}^{\left\lfloor {n/2} \right\rfloor } {\binom{{n}}{{2k}}2^{ - 2k} C_k \binom{{n - s - 1}}{{2k + r}}^{ - 1} }  = ( - 1)^{r - 1}\, \frac{{n - s}}{{r + s}}\,\sum_{k = 0}^n {\binom{{n}}{k}2^{ - k} C_{k + 1} \binom{{r + s - 1}}{{k + r}}^{-1}} .
\end{equation}
In particular,
\begin{equation}\label{x5itttn}
\sum_{k = 0}^{\left\lfloor {n/2} \right\rfloor } {\frac{{2^{2k} }}{{\left( {2n - 4k - 1} \right)\left( {2n - 4k - 3} \right)}}\binom{{n}}{{2k}}^2 \binom{{2k}}{k}\binom{{2n}}{{4k}}^{ - 1} \frac{1}{{C_{2k} }}}  = ( - 1)^n\, \frac{{2n + 1}}{3}.
\end{equation}
\end{proposition}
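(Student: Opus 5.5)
Our plan is to obtain \eqref{z5mk6gd} directly from Theorem~\ref{thm.touchard}, i.e.\ from~\eqref{gen_touchard}, by a change of parameters followed by a reflection (negation) formula for the generalized binomial coefficient.

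\textbf{The reflection formula.} From~\eqref{y89d722} and Euler's reflection formula $\Gamma(z)\Gamma(1-z)=\pi/\sin(\pi z)$, one gets, for complex $x$ and integer $m$,
\begin{equation}
\binom{x}{m}=(-1)^m\binom{m-x-1}{m}.
\end{equation}
Here the sine factors reduce to $(-1)^m$ precisely because $m$ is an integer. Since $r$ is a non-negative integer, $m=2k+r$ is an integer in every term on the left of \eqref{z5mk6gd}. Applying the formula with $x=n-s-1$, and then the symmetry $\binom{a}{b}=\binom{a}{a-b}$, we get
\begin{equation}
\binom{n-s-1}{2k+r}=(-1)^{r}\binom{2k+r-n+s}{2k+r}=(-1)^{r}\binom{2k+(r+s-n)}{s-n}.
\end{equation}

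\textbf{Specialization of the theorem.} The left-hand side of \eqref{z5mk6gd} is therefore $(-1)^r$ times the left-hand side of \eqref{gen_touchard} with $r\mapsto r'=r+s-n$ and $s\mapsto s'=s-n$. The hypothesis that $s-n$ is not a negative integer is exactly what Theorem~\ref{thm.touchard} requires of $s'$. On the right-hand side of \eqref{gen_touchard} we have
\begin{equation}
\frac{s'}{n+r'}=-\frac{n-s}{r+s},\qquad \binom{n+r'-1}{k+r'-s'}=\binom{r+s-1}{k+r}.
\end{equation}
Combining the factor $(-1)^r$ with the minus sign gives $(-1)^{r-1}$, which is \eqref{z5mk6gd}. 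The one point to check is that Theorem~\ref{thm.touchard} legitimately applies at $r'=r+s-n$, which may be a negative non-integer or zero. We would argue by analytic continuation in $r'$: both sides of \eqref{gen_touchard} are finite sums of meromorphic functions of $r'$. Alternatively, one can redo the integration argument directly for the parameters in question.

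\textbf{The special case \eqref{x5itttn}.} The aim is to choose $r$ and $s$ so that the right-hand side of \eqref{z5mk6gd} collapses to a closed form. Our first attempt is $r=1$ and $s=-1/2$. Then $\binom{r+s-1}{k+r}=\binom{-1/2}{k+1}=(-1)^{k+1}2^{-2k-2}\binom{2k+2}{k+1}$. This cancels the central binomial coefficient inside $C_{k+1}$, leaving a sum of the shape $\sum_k\binom nk(-2)^k/(k+2)$. That sum is evaluated by the Beta integral $\int_0^1 y(1-2y)^n\,dy$, or equivalently by the binomial theorem. On the left, $\binom{n-1/2}{2k+1}^{-1}$ has to be rewritten using \eqref{v7g54vl}--\eqref{ho6212z} in terms of $\binom{2n}{4k}$, $\binom n{2k}$, central binomial coefficients and the linear factors $2n-4k-1$ and $2n-4k-3$. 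Combined with $C_k$, this should produce the factors $\binom{2k}{k}/C_{2k}$ appearing in \eqref{x5itttn}.

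\textbf{Main obstacle.} We expect the main work to be this last step: identifying the correct pair $(r,s)$ and carrying out the Gamma-function bookkeeping so that the left side takes exactly the stated form and the right side becomes $(-1)^n(2n+1)/3$. If $r=1$, $s=-1/2$ does not produce the factor $(2n-4k-1)(2n-4k-3)$, we would next try $s=-3/2$ with $r=0$ or $r=2$, since these shift the half-integer upper index by one and introduce exactly such quadratic factors.
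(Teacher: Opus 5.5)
Your derivation of the general identity is correct and is the paper's own argument. You substitute $r\mapsto r+s-n$, $s\mapsto s-n$ in Theorem~\ref{thm.touchard} and use upper negation, $\binom{2k+r+s-n}{2k+r}=(-1)^r\binom{n-s-1}{2k+r}$. Your analytic-continuation worry is unnecessary. If $s-n$ is a non-negative integer then $r+s-n\ge 0$; otherwise $r+s-n$ is not an integer. Either way, the new parameters satisfy the theorem's hypotheses as stated.

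The gap is in the special case: none of the parameter pairs you list produces it. With $r=1$, $s=-1/2$ you have $\binom{n-1/2}{2k+1}=2^{-4k-1}(2n-4k-1)\binom{2n}{4k}\binom{n}{2k}^{-1}C_{2k}$. Your integral $\int_0^1 y(1-2y)^n\,dy$ equals $\frac{1}{2(n+1)}$ for even $n$ and $-\frac{1}{2(n+2)}$ for odd $n$. So what comes out is the true but different identity
\[
\sum_{k=0}^{\lfloor n/2\rfloor}\frac{2^{2k}}{2n-4k-1}\binom{n}{2k}^2\binom{2n}{4k}^{-1}\frac{C_k}{C_{2k}}
=\begin{cases}
-\dfrac{2n+1}{n+1}, & \text{$n$ even},\\
\dfrac{2n+1}{n+2}, & \text{$n$ odd}.
\end{cases}
\]
It has a single linear factor, $C_k$ instead of $\binom{2k}{k}$, and a parity-dependent right side.

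Your fallbacks with $s=-3/2$ fail as well. They move the upper index to $n+1/2$, so the left side carries only one linear factor $2n-4k+1$ or $2n-4k-1$, not the product $(2n-4k-1)(2n-4k-3)$. The right side becomes a sum such as $\sum_k\binom nk(-2)^k/(2k+3)$ (for $r=2$), which does not collapse to a closed form.

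The quadratic factor comes from raising the lower index while keeping the upper index at $n-1/2$. That means taking $r=2$, $s=-1/2$, which is the paper's choice. Then the prefactor is $-(2n+1)/3$. Since $\binom{1/2}{k+2}=(-1)^{k+1}2^{-2k-3}C_{k+1}$, the Catalan number $C_{k+1}$ cancels completely, and by the binomial theorem the right side is $\frac{8(2n+1)}{3}\sum_k\binom nk(-2)^k=(-1)^n\frac{8(2n+1)}{3}$. On the left, $\binom{n-1/2}{2k+2}=\frac{2^{-4k}}{8}\,\frac{(2n-4k-1)(2n-4k-3)}{k+1}\binom{2n}{4k}\binom{n}{2k}^{-1}C_{2k}$. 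Its factor $1/(k+1)$ turns $C_k$ into $\binom{2k}{k}$, and dividing both sides by $8$ gives exactly the stated identity.
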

\begin{proof}
Write $r-n$ for $r$ and $s-n$ for $s$ in~\eqref{gen_touchard} and write $r+s$ for $r$ in the resulting equation, noting that
\begin{equation}
\binom{2k+r+s-n}{2k+r}=(-1)^r\binom{n-s-1}{2k+r}.
\end{equation}
To derive~\eqref{x5itttn}, set $r=2$ and $s=-1/2$ in~\eqref{z5mk6gd} and use
\begin{equation}
\binom{{1/2}}{{k + 2}} = \frac{{( - 1)^{k + 1} }}{{2^{2k + 3} }}C_{k + 1} 
\end{equation}
and
\begin{equation}
\binom{{n - 1/2}}{{2k + 2}} = \frac{{2^{ - 4k} }}{8}\frac{{\left( {2n - 4k - 1} \right)\left( {2n - 4k - 3} \right)}}{{k + 1}}\binom{{2n}}{{4k}}\binom{{n}}{{2k}}^{ - 1} C_{2k} .
\end{equation}
\end{proof}
\begin{proposition}
If $n$ is a non-negative integer and $r$ and $s$ are complex numbers that are not negative integers, then
\begin{align}
&\sum_{k = 0}^{\left\lfloor {n/2} \right\rfloor } {\binom{{n}}{{2k}}2^{ - 2k} C_k H_{2k + r - s} \binom{{2k + r}}{s}^{ - 1} }\\
&\qquad  = \frac{s}{{n + r}}\sum_{k = 0}^n {\binom{{n}}{k}2^{ - k} C_{k + 1} \left( {H_{k + r - s}  + H_{s - 1}  - H_{n + s - k - 1} } \right)\binom{{n + r - 1}}{{k + r - s}}^{-1}}\label{j5hkdla} .
\end{align}
In particular,
\begin{equation}\label{kzddtlk}
\sum_{k = 0}^{\left\lfloor {n/2} \right\rfloor } {\binom{{n}}{{2k}}2^{ - 2k} C_k H_{2k + r} }  = 2^{ - n} C_{n + 1} H_{n + r}  - \sum_{k = 0}^{n - 1} {\binom{{n}}{k}\frac{2^{ - k}}{n-k} C_{k + 1} \binom{{n + r}}{{k + r}}^{ - 1} }
\end{equation}
and
\begin{align}
&\sum_{k = 0}^{\left\lfloor {n/2} \right\rfloor } {\binom{{n}}{{2k}}\frac{{2^{ - 2k} }}{{2k + r}}\,C_k H_{2k + r - 1} }\\
&\qquad  = \frac{1}{{n + r}}\binom{{n + r - 1}}{n}^{ - 1} \sum_{k = 0}^n {2^{ - k} C_{k + 1} \left( {H_{k + r - 1}  - H_{n - k} } \right)\binom{{k + r - 1}}{k}} \label{ck18ui4}.
\end{align}
\end{proposition}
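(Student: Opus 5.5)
The natural route is to differentiate the integral identity behind Theorem~\ref{thm.touchard} with respect to $s$, rather than differentiating the final identity~\eqref{gen_touchard} directly. Recall that in the proof of Theorem~\ref{thm.touchard} we multiplied~\eqref{fe8atkt} by $x^{r-s}(1-x)^{s-1}$ and integrated over $[0,1]$. All exponents in that integrand depend on $s$ through the factor $\bigl((1-x)/x\bigr)^{s}$. So $\partial/\partial s$ simply multiplies each integrand by $\log\bigl((1-x)/x\bigr)$. Equivalently, we differentiate with respect to $s$ the Beta-function evaluations in Lemma~\ref{integrals} and then use the Theorem's own identity. The result is a valid identity for all admissible $r,s$, since both sides are analytic in $s$.

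The key computation uses $\frac{d}{dz}\log\Gamma(z+1)=\psi(z+1)=H_z-\gamma$, valid for complex $z$.

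For the left-hand side, write $\frac1s\binom{2k+r}{s}^{-1}=\Gamma(2k+r-s+1)\Gamma(s)/\Gamma(2k+r+1)$. Its $s$-derivative is
\begin{equation}
\frac1s\binom{2k+r}{s}^{-1}\left(H_{s-1}-H_{2k+r-s}\right).
\end{equation}

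For the right-hand side, the integral $\int_0^1 x^{k+r-s}(1-x)^{n-k+s-1}dx=\frac{1}{n+r}\binom{n+r-1}{k+r-s}^{-1}$ is independent of $s$ in its outer factor. Its $s$-derivative is
\begin{equation}
\frac{1}{n+r}\binom{n+r-1}{k+r-s}^{-1}\left(H_{n-k+s-1}-H_{k+r-s}\right).
\end{equation}

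Equating the two differentiated sums gives
\begin{equation}
\frac1s\sum \binom{n}{2k}2^{-2k}C_k\binom{2k+r}{s}^{-1}\left(H_{s-1}-H_{2k+r-s}\right)=\frac{1}{n+r}\sum\binom nk2^{-k}C_{k+1}\binom{n+r-1}{k+r-s}^{-1}\left(H_{n-k+s-1}-H_{k+r-s}\right).
\end{equation}
The $H_{s-1}$ part of the left side is exactly $H_{s-1}$ times the left side of Theorem~\ref{thm.touchard}, divided by $s$. We replace it by $H_{s-1}$ times the right side and multiply through by $-s$, which yields~\eqref{j5hkdla}.

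For~\eqref{kzddtlk}, let $s\to 0$ in~\eqref{j5hkdla}, as was done in~\eqref{pnfixx7}. On the left, $\binom{2k+r}{s}^{-1}\to1$ and $H_{2k+r-s}\to H_{2k+r}$.

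On the right, the main obstacle is the delicate limit. We use $sH_{s-1}=sH_s-1\to-1$. For $k<n$ we also have $s\binom{n+r-1}{k+r-s}^{-1}\to 0$, while the bracket stays bounded apart from $H_{s-1}$. Together these give
\begin{equation}
sH_{s-1}\binom{n+r-1}{k+r-s}^{-1}\to -\binom{n+r-1}{k+r}^{-1}.
\end{equation}
This should be rewritten as $-\frac{n+r}{n-k}\binom{n+r}{k+r}^{-1}$.

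For $k=n$, $s\binom{n+r-1}{n+r-s}^{-1}\to n+r$, and the bracket combines as $H_{n+r}+H_{s-1}-H_{s-1}=H_{n+r}$. This gives the term $2^{-n}C_{n+1}H_{n+r}$. One must check that the $H_{s-1}$ singularities cancel exactly in the $k=n$ term, which they do because $H_{n+s-k-1}=H_{s-1}$ there.

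For~\eqref{ck18ui4}, set $s=1$, so $H_0=0$. Use $\binom{2k+r}{1}^{-1}=1/(2k+r)$ and convert $\binom{n+r-1}{k+r-1}^{-1}$ via~\eqref{j5c96vt}, exactly as in the derivation of~\eqref{t7drac2}.
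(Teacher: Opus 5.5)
Your proposal is correct and follows the paper's proof. You differentiate the Beta-integral form of Theorem~\ref{thm.touchard} with respect to $s$, reabsorb the $H_{s-1}$ term using Theorem~\ref{thm.touchard} itself, take $s\to 0$ with the $k=n$ term separated, and finally set $s=1$ and apply~\eqref{j5c96vt}. The only cosmetic difference is that you differentiate the identity before multiplying through by $s$, which spares you the product-rule term and the conversion $H_s-1/s=H_{s-1}$ that the paper's version needs.
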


\begin{proof}
Differentiate~\eqref{gen_touchard} with respect to $s$ to obtain
\begin{align}
&\sum_{k = 0}^{\left\lfloor {n/2} \right\rfloor } {\binom{{n}}{{2k}}2^{ - 2k} C_k \left( {H_{2k + r - s}  - H_s } \right)\binom{{2k + r}}{s}^{ - 1} }\\
&\qquad  =  - \frac{1}{{n + r}}\sum_{k = 0}^n {\binom{{n}}{k}2^{ - k} C_{k + 1} \binom{{n + r - 1}}{{k + r - s}}^{ - 1} }\\
&\qquad\qquad  + \frac{s}{{n + r}}\sum_{k = 0}^n {\binom{{n}}{k}2^{ - k} C_{k + 1} \left( {H_{k + r - s}  - H_{n + s - k - 1} } \right)\binom{{n + r - 1}}{{k + r - s}}^{-1}}, 
\end{align}
from which~\eqref{j5hkdla} follows after using~\eqref{gen_touchard} again. To derive~\eqref{kzddtlk}, note that $sH_{s-1}=sH_s-1$, write the right-hand side of~\eqref{j5hkdla} as
\begin{align}
&\frac{1}{{n + r}}\sum_{k = 0}^{n - 1} {\binom{{n}}{k}2^{ - k} C_{k + 1} \binom{{n + r - 1}}{{k + r - s}}^{ - 1} \left( {sH_{k + r - s}  + sH_s  - 1 - sH_{n + s - k - 1} } \right)}\\
&\qquad  + \frac{1}{{n + r}}2^{ - n} sC_{n + 1} H_{n + r - s} \binom{{n + r - 1}}{{n + r - s}}^{ - 1} 
\end{align}
and take the limit as $s$ approaches zero, using~\eqref{pnfixx7}. Identity~\eqref{ck18ui4} is obtained by setting $s=1$ in~\eqref{j5hkdla} and making use of~\eqref{j5c96vt}.
\end{proof}
\begin{proposition}
If $n$ is a non-negative integer, then
\begin{equation}\label{uj0xxbk}
\sum_{k = 0}^{\left\lfloor {n/2} \right\rfloor } {\binom{{n}}{{2k}}2^{ - 2k} C_k H_{2k} }  = 2^{ - n} C_{n + 1} H_n  - \sum_{k = 0}^{n - 1} {\frac{{2^{ - k} C_{k + 1} }}{{n - k}}} 
\end{equation}
and
\begin{align}
&\sum_{k = 0}^{\left\lfloor {n/2} \right\rfloor } {\binom{{n}}{{2k}}2^{ - 2k} C_k O_{2k} }\\
&\qquad= 2^{ - n} C_{n + 1} O_n - 2^{ - n - 1} \sum_{k = 1}^n {\frac{{2^{3k} }}{k}\binom{{n}}{k}^2 \binom{{2k}}{k}^{ - 1} \binom{{2n}}{{2k}}^{ - 1} C_{n - k + 1} }\label{uyctab9} .
\end{align}
\end{proposition}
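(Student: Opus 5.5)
Both identities should follow by specializing \eqref{kzddtlk} to particular values of $r$: $r=0$ for \eqref{uj0xxbk} and $r=-1/2$ for \eqref{uyctab9}. The harmonic numbers at half-integer arguments are then converted to odd harmonic numbers via \eqref{nerd5x1}.

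For \eqref{uj0xxbk}, set $r=0$ in \eqref{kzddtlk}. The factor $\binom{n+r}{k+r}^{-1}$ becomes $\binom nk^{-1}$ and cancels the $\binom nk$ in the summand. What remains is exactly $2^{-n}C_{n+1}H_n-\sum_{k=0}^{n-1}2^{-k}C_{k+1}/(n-k)$, so nothing more is needed.

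For \eqref{uyctab9}, set $r=-1/2$ in \eqref{kzddtlk}, which is allowed since $-1/2$ is not a negative integer. By \eqref{nerd5x1}, $H_{2k-1/2}=2O_{2k}-2\ln2$ on the left and $H_{n-1/2}=2O_n-2\ln 2$ on the right.
\begin{itemize}
\item On the left, the $\ln 2$ part is $-2\ln2\sum_k\binom n{2k}2^{-2k}C_k$. By Touchard's identity \eqref{touchard} (equivalently \eqref{fe8atkt} at $x=0$), this equals $-2\ln2\cdot 2^{-n}C_{n+1}$.
\item On the right, the $\ln2$ part is $-2\ln2\cdot 2^{-n}C_{n+1}$ as well, so the two cancel.
\end{itemize}
Dividing by $2$ leaves $2^{-n}C_{n+1}O_n$ minus one half of
\begin{equation}
\sum_{k=0}^{n-1}\binom nk\frac{2^{-k}}{n-k}C_{k+1}\binom{n-1/2}{k-1/2}^{-1}.
\end{equation}

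The main step is to express this binomial coefficient with half-integer entries in terms of central binomial coefficients. By \eqref{y89d722},
\begin{equation}
\binom{n-1/2}{k-1/2}=\frac{\Gamma(n+1/2)}{\Gamma(k+1/2)\,\Gamma(n-k+1)} .
\end{equation}
Applying \eqref{v7g54vl} to $\Gamma(n+1/2)$ and $\Gamma(k+1/2)$ should give
\begin{equation}
\binom{n-1/2}{k-1/2}=2^{-2(n-k)}\binom{2n}{n}\binom{2k}{k}^{-1}\binom nk .
\end{equation}
The summand then simplifies to
\begin{equation}
\frac{2^{2n-3k}}{n-k}\,C_{k+1}\binom{2k}k\binom{2n}n^{-1}.
\end{equation}

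Next, reindex with $j=n-k$, so that $j$ runs from $1$ to $n$. The summand becomes
\begin{equation}
2^{-n}\,\frac{2^{3j}}{j}\,C_{n-j+1}\binom{2n-2j}{n-j}\binom{2n}n^{-1}.
\end{equation}
To reach the stated form, use the factorial identity
\begin{equation}
\binom nj^2\binom{2j}j^{-1}\binom{2n}{2j}^{-1}=\binom{2n-2j}{n-j}\binom{2n}{n}^{-1},
\end{equation}
which is checked by expanding all factorials: both sides equal $n!^2(2n-2j)!/\bigl((n-j)!^2(2n)!\bigr)$.

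Combining the factor $1/2$ with the $2^{-n}$ gives the prefactor $2^{-n-1}$ in \eqref{uyctab9}. The only real bookkeeping risk is the powers of $2$ in the half-integer Gamma conversion, so that step should be verified carefully, for instance by checking $n=1$ directly.
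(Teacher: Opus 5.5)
Your proposal is correct and follows the paper's own proof: specialize \eqref{kzddtlk} at $r=0$ and at $r=-1/2$, with the latter using \eqref{nerd5x1} and the Gamma-function reduction of $\binom{n-1/2}{k-1/2}$ that the paper compresses into ``after some algebra.'' Your explicit details check out. These are the cancellation of the $\ln 2$ terms via Touchard's identity, the formula $\binom{n-1/2}{k-1/2}=2^{-2(n-k)}\binom{2n}{n}\binom{2k}{k}^{-1}\binom nk$, the reindexing $j=n-k$, and the factorial identity; they reproduce the stated right-hand side, which I confirmed at $n=1$ and $n=2$.
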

\begin{proof}
Set $r=0$ in~\eqref{kzddtlk} to obtain~\eqref{uj0xxbk}, and $r=-1/2$ to derive~\eqref{uyctab9} after some algebra.
\end{proof}
\begin{proposition}
If $n$ is a non-negative integer, then
\begin{align}
&\sum_{k = 0}^{\left\lfloor {n/2} \right\rfloor } {\binom{{n}}{{2k}}2^{ - 6k} C_k C_{2k} O_{2k} }\\
&\qquad  = \frac{1}{{2^{2n} \left( {n + 1} \right)}}\sum_{k = 0}^n {\left( {2\left( {n - k} \right) + 1} \right) \binom{{2\left( {n - k} \right)}}{{n - k}}\binom{{2k}}{k}2^{ - k}C_{k + 1} \left( {O_k  - O_{n - k + 1}  + 1} \right)} .
\end{align}
\end{proposition}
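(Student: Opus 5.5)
We specialize the differentiated identity~\eqref{j5hkdla} at $r=1$ and $s=3/2$, which are exactly the parameter values used to obtain~\eqref{v2skdnk} from~\eqref{gen_touchard}. The binomial factors then become the same as in that proof, and the harmonic numbers become half-integer harmonic numbers, which we convert to odd harmonic numbers using~\eqref{nerd5x1}.

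\emph{Left-hand side.} With $r=1$, $s=3/2$, the factor $\binom{2k+1}{3/2}^{-1}$ is rewritten by~\eqref{n86netd} as $\frac{3\pi}{4}\,2^{-4k}C_{2k}$. The harmonic factor is $H_{2k+1-3/2}=H_{2k-1/2}$. By~\eqref{nerd5x1} this equals $2O_{2k}-2\ln 2$. Hence the left side of~\eqref{j5hkdla} is
\begin{equation}
\tfrac{3\pi}{4}\sum_{k}\binom{n}{2k}2^{-6k}C_kC_{2k}\left(2O_{2k}-2\ln 2\right).
\end{equation}

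\emph{Right-hand side.} Here $k+r-s=k-1/2$ and $n+r-1=n$. The binomial $\binom{n}{k-1/2}^{-1}$ is handled by~\eqref{z926t63}, exactly as in the derivation of~\eqref{v2skdnk}. The prefactor $\frac{s}{n+r}=\frac{3}{2(n+1)}$ is unchanged. The harmonic combination is $H_{k-1/2}+H_{1/2}-H_{n-k+1/2}$. Applying~\eqref{nerd5x1} three times, and writing $H_{n-k+1/2}=H_{(n-k+1)-1/2}$ and $H_{1/2}=2O_1-2\ln2=2-2\ln 2$, we get
\begin{equation}
2O_k+2-2\ln2-2O_{n-k+1} = 2\left(O_k-O_{n-k+1}+1\right)-2\ln 2 .
\end{equation}

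\emph{Cancellation.} On each side, the $-2\ln 2$ term multiplies precisely the two sides of the $r=1$, $s=3/2$ case of~\eqref{gen_touchard}, that is,~\eqref{v2skdnk} before simplification. Those pieces are equal and cancel. Dividing the remainder by $2$, the common constants ($\pi$, $3/4$, and so on) become the same normalization as in~\eqref{v2skdnk}. The left side reduces to $\sum\binom{n}{2k}2^{-6k}C_kC_{2k}O_{2k}$. The right side reduces to $\frac{1}{2^{2n}(n+1)}\sum(2(n-k)+1)\binom{2(n-k)}{n-k}\binom{2k}{k}2^{-k}C_{k+1}(O_k-O_{n-k+1}+1)$.

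The only delicate point is bookkeeping the arguments of $H$ correctly, especially $H_{n+s-k-1}=H_{n-k+1/2}$, which produces the index $n-k+1$ rather than $n-k$. Beyond that, the constant factors need no new computation: they are identical to those already verified for~\eqref{v2skdnk}, so the normalization carries over directly.
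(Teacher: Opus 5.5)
Your proof is correct and follows the paper's route: specialize \eqref{j5hkdla} at $r=1$, $s=3/2$, convert the half-integer harmonic numbers via \eqref{nerd5x1}, and simplify the binomial factors with \eqref{n86netd} and \eqref{z926t63}. The only difference is cosmetic: the paper ``equates rational parts,'' whereas you cancel the $\ln 2$ terms against the undifferentiated $r=1$, $s=3/2$ identity \eqref{v2skdnk}, which avoids appealing to the irrationality of $\ln 2$.
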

\begin{proof}
Set $r=1$ and $s=3/2$ in~\eqref{j5hkdla} and equate rational parts after using~\eqref{n86netd} and~\eqref{z926t63}.
\end{proof}
\begin{proposition}
If $n$ is a non-negative integer, then
\begin{equation}\label{zvgyvn4}
\sum_{k = 0}^{\left\lfloor {n/2} \right\rfloor } {\binom{{n}}{{2k}}\frac{{2^{ - 2k} }}{{2k + 1}}\,C_k H_{2k} }  = \frac{1}{{n + 1}}\sum_{k = 0}^n {2^{ - k} C_{k + 1} \left( {H_k  - H_{n - k} } \right)} 
\end{equation}
and
\begin{equation}\label{kxh1tad}
\sum_{k = 0}^{\left\lfloor {n/2} \right\rfloor } {\binom{{n}}{{2k}}\frac{{2^{ - 2k} }}{{4k + 1}}\,C_k O_{2k} }  = \frac{{2^{2n - 1} }}{{2n + 1}}\binom{{2n}}{n}^{ - 1} \sum_{k = 0}^n {2^{ - 3k} C_{k + 1} \binom{{2k}}{k}\left( {2O_k  - H_{n - k} } \right)}.
\end{equation}
\end{proposition}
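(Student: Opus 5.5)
Both identities will come from specializing \eqref{ck18ui4}, exactly as \eqref{u7r6uj7} and \eqref{c3ntdo9} came from \eqref{t7drac2}.

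\textbf{First identity.} For \eqref{zvgyvn4} we set $r=1$ in \eqref{ck18ui4}. The left-hand side becomes
\[
\sum_{k}\binom{n}{2k}\frac{2^{-2k}}{2k+1}C_kH_{2k}.
\]
On the right, $\binom{n}{n}^{-1}=1$ and $\binom{k}{k}=1$. This leaves
\[
\frac1{n+1}\sum_{k=0}^n 2^{-k}C_{k+1}\left(H_k-H_{n-k}\right),
\]
which is \eqref{zvgyvn4} with nothing further to check.

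\textbf{Second identity: left-hand side.} For \eqref{kxh1tad} we set $r=1/2$ in \eqref{ck18ui4}. Then $2k+r=(4k+1)/2$, so the factor $1/(2k+r)$ becomes $2/(4k+1)$. The harmonic number becomes $H_{2k-1/2}=2O_{2k}-2\ln 2$ by \eqref{nerd5x1}. Hence the left side equals
\[
2\sum\binom n{2k}\frac{2^{-2k}}{4k+1}C_k\bigl(2O_{2k}-2\ln2\bigr).
\]

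\textbf{Second identity: right-hand side.} Here $n+r=(2n+1)/2$. By \eqref{p1jsq1n}, $\binom{n-1/2}{n}^{-1}=2^{2n}\binom{2n}{n}^{-1}$ and $\binom{k-1/2}{k}=2^{-2k}\binom{2k}{k}$. Also $H_{k-1/2}=2O_k-2\ln 2$. So the right side is
\[
\frac{2^{2n+1}}{2n+1}\binom{2n}n^{-1}\sum_k 2^{-3k}\binom{2k}kC_{k+1}\bigl(2O_k-H_{n-k}-2\ln 2\bigr).
\]

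\textbf{Cancelling the logarithms.} The $\ln 2$ terms on the two sides are $-4\ln2$ times the left side of \eqref{c3ntdo9} and $-4\ln 2$ times its right side (after accounting for the prefactor $2^{2n+1}/(2n+1)$). They therefore cancel by \eqref{c3ntdo9}. Alternatively, since everything else is rational, one can equate rational parts as in the earlier propositions. Dividing the remaining equation by $4$ gives the prefactor $2^{2n-1}/(2n+1)$ and yields \eqref{kxh1tad}.

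The only step needing care is this bookkeeping of the factors of $2$ in the $r=1/2$ specialization and confirming that the $\ln 2$ contributions cancel exactly via \eqref{c3ntdo9}. Everything else is direct substitution.
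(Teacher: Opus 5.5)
Your proposal is correct and follows the paper's proof: you specialize \eqref{ck18ui4} at $r=1$ and at $r=1/2$, use \eqref{nerd5x1} and \eqref{p1jsq1n}, and track the factors of $2$ correctly. Your observation that the $\ln 2$ terms cancel outright by \eqref{c3ntdo9} is a slightly cleaner justification than the paper's ``equating rational parts,'' because it avoids any appeal to the irrationality of $\ln 2$.
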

\begin{proof}
Identity~\eqref{zvgyvn4} is the evaluation of~\eqref{ck18ui4} at $r=1$. Identity~\eqref{kxh1tad} is obtained by plugging $r=1/2$ in~\eqref{ck18ui4}, making use of~\eqref{nerd5x1} and~\eqref{p1jsq1n} and equating rational parts.
\end{proof}

\section{A generalization involving Stirling numbers of the second kind}

\begin{lemma}\label{m-derivatives}
If $m$ and $u$ are non-negative integers, then
\begin{equation}\label{eflzmiz}
\left. {\frac{{d^m }}{{dx^m }}\left( {{1 - e^x } } \right)^u} \right|_{x = 0}  = \sum_{p = 0}^u {( - 1)^p \binom{{u}}{p}p^m }=(-1)^uu!\braces mu.
\end{equation}

\end{lemma}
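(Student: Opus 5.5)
Expand $(1-e^x)^u$ by the binomial theorem, differentiate term by term, and then match the result with the explicit representation~\eqref{bluqqd7} of the Stirling numbers of the second kind. Since $u$ is a non-negative integer, we have
\begin{equation}
\left(1-e^x\right)^u=\sum_{p=0}^u (-1)^p\binom{u}{p}e^{px},
\end{equation}
a finite sum, so termwise differentiation is legitimate. Using $\frac{d^m}{dx^m}e^{px}=p^m e^{px}$ and evaluating at $x=0$ gives the first equality in~\eqref{eflzmiz}:
\begin{equation}
\left.\frac{d^m}{dx^m}\left(1-e^x\right)^u\right|_{x=0}=\sum_{p=0}^u(-1)^p\binom{u}{p}p^m .
\end{equation}
Here we adopt the convention $0^0=1$, which is needed for the $p=0$, $m=0$ term. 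It is consistent with $\frac{d^0}{dx^0}e^{0\cdot x}=1$.

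For the second equality, set $n=u$ in~\eqref{bluqqd7} and multiply both sides by $(-1)^u u!$. Since $(-1)^u(-1)^u=1$, this yields
\begin{equation}
(-1)^u u!\braces{m}{u}=\sum_{k=0}^u(-1)^k\binom{u}{k}k^m,
\end{equation}
which is the middle expression after renaming $k$ as $p$.

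No step here is a genuine obstacle. The only point needing care is the boundary case $m=0$, where the $0^0$ convention has to agree with~\eqref{bluqqd7}. When $m=0$ and $u=0$, both sides equal $1$. When $m=0$ and $u\ge1$, both sides vanish: on the left, $(1-e^x)^u$ vanishes at $x=0$; on the right, $\braces{0}{u}=0$. If one prefers not to rely on~\eqref{bluqqd7} as a definition, the same identity follows by reading off the coefficient of $x^m/m!$ in the classical generating function
\begin{equation}
\left(e^x-1\right)^u=u!\sum_{m}\braces{m}{u}\frac{x^m}{m!},
\end{equation}
together with $(1-e^x)^u=(-1)^u(e^x-1)^u$.
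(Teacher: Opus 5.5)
Your proof is correct and follows the paper's argument exactly: expand $(1-e^x)^u$ binomially, differentiate term by term, evaluate at $x=0$, and identify the resulting sum with $(-1)^u u!\braces{m}{u}$ via the explicit formula~\eqref{bluqqd7}. Your remarks on the $0^0$ convention for the $m=0$ case and the generating-function cross-check are sound additions beyond what the paper writes.
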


\begin{proof}

Since
\begin{equation}
\left( {1 - e^x } \right)^u  = \sum_{p = 0}^u {( - 1)^p \binom{{u}}{p}e^{px} } ,
\end{equation}
we have
\begin{equation}
\frac{{d^m }}{{dx^m }} {\left( {1 - e^x } \right)^u } = \sum_{p = 0}^u {( - 1)^p \binom{{u}}{p}p^m e^{px} };
\end{equation}
and hence~\eqref{eflzmiz} on account of~\eqref{bluqqd7}.

\end{proof}

\begin{theorem}
If $n$ and $r$ are non-negative integers, then
\begin{equation}
\sum_{k = 0}^{\left\lfloor {n/2} \right\rfloor } {\binom{{n}}{{2k}}2^{n - 2k} \left( {n - 2k} \right)^r C_k }  = \sum_{k = 0}^r {2^k \binom{{n}}{k}\braces{{ r}}{k}k!C_{n - k + 1} }.
\end{equation}

\end{theorem}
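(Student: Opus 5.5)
Our plan is to substitute $x=e^t$ into~\eqref{fe8atkt}, multiply by a suitable power of $e^t$, and differentiate $r$ times at $t=0$ with the help of Lemma~\ref{m-derivatives}.

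\textbf{Rescaling.} First we put~\eqref{fe8atkt} into a form with the $2^{n-2k}$ weights. Multiplying~\eqref{fe8atkt} by $2^n$ and replacing $1-x$ by $2(1-x)$ (that is, $x\mapsto 2x-1$) is awkward. Instead, we work with the homogeneous version. Put $1-x = 2y$, so $x = 1-2y$. Then~\eqref{fe8atkt} reads
\begin{equation}
\sum_{k}\binom{n}{2k}C_k y^{2k} = \sum_{k}\binom nk C_{k+1} y^k (1-2y)^{n-k}.
\end{equation}
Both sides are polynomials of degree at most $n$. We homogenize by writing $y=1/z$ and multiplying by $z^n$, which gives
\begin{equation}
\sum_{k}\binom{n}{2k}C_k z^{n-2k} = \sum_k \binom nk C_{k+1}(z-2)^{n-k}.
\end{equation}

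\textbf{Substitution and differentiation.} Next we set $z=2e^{t}$. The left side becomes $\sum_k\binom n{2k}2^{n-2k}C_k e^{(n-2k)t}$. The right side becomes $\sum_j\binom nj C_{n-j+1}2^{j}(e^t-1)^j$ after reindexing $j=n-k$. We then apply $d^r/dt^r$ at $t=0$. On the left this produces exactly $\sum_k\binom n{2k}2^{n-2k}(n-2k)^rC_k$.

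On the right, Lemma~\ref{m-derivatives} gives
\begin{equation}
\frac{d^r}{dt^r}(e^t-1)^j\Big|_{t=0}=(-1)^j\cdot(-1)^j j!\braces rj = j!\braces rj.
\end{equation}
The right side therefore becomes $\sum_j 2^j\binom nj\braces rj j!\,C_{n-j+1}$. Since $\braces rj=0$ for $j>r$, the sum may be truncated at $j=r$; for $r>n$ the factor $\binom nj$ vanishes anyway.

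\textbf{Main obstacle.} The only delicate step is the change of variables, namely checking that the homogenized identity is valid as a polynomial identity in $z$, including the $z=0$ issue. Since both sides of the $y$-form are polynomial identities valid for all $y$, the transformation $y=1/z$ with multiplication by $z^n$ is legitimate for $z\neq0$ and then extends by continuity. A quick check at $r=0$ gives $z=2$, i.e.\ $t=0$, where only the $j=0$ term survives, recovering Touchard's identity~\eqref{touchard}; this confirms the normalization.
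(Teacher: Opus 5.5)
Your proof is correct and is essentially the paper's argument: your substitution $1-x=2/z$ turns \eqref{fe8atkt} into $2^n$ times the paper's \eqref{cmtd13h} (with $z=2x$), after which substituting an exponential, differentiating $r$ times at the origin, and applying Lemma~\ref{m-derivatives} is exactly what the paper does. The only difference is cosmetic: putting the factor $2$ into $z=2e^t$ makes the weights $2^{n-2k}$ appear directly, instead of after a final multiplication by $2^n$.
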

\begin{proof}
By writing $1/x$ for $x$, Identity~\eqref{fe8atkt} can also be written as
\begin{equation}\label{cmtd13h}
\sum_{k = 0}^{\left\lfloor {n/2} \right\rfloor } {\binom{{n}}{{2k}}2^{ - 2k} C_k x^{n - 2k} }=\sum_{k = 0}^n {( - 1)^{n - k} \binom{{n}}{k}2^{ - k} C_{k + 1} \left( {1 - x} \right)^{n - k} } .
\end{equation}
Now replace $x$ with $\exp x$, differentiate the resulting expression $r$ times with respect to $x$ and evaluate at $x=0$, making use of Lemma~\ref{m-derivatives}.
\end{proof}

\section{Related results via binomial transform}
Two sequences of complex numbers $(s_k)$ and $(\sigma_k)$, $k=0,1,2,\ldots$, are called a binomial- transform pair if
\begin{equation}\label{bt}
s_n  = \sum_{k = 0}^n {( - 1)^k \binom{{n}}{k}\sigma _k },
\end{equation}
for every non-negative integer $n$. It is known that
\begin{equation}
s_n  = \sum_{k = 0}^n {( - 1)^k \binom{{n}}{k}\sigma _k }\iff \sigma_n  = \sum_{k = 0}^n {( - 1)^k \binom{{n}}{k}s_k } .
\end{equation}

\begin{lemma}[{\cite[Lemma 2.1]{adegoke25}}]\label{qqdca04}
Let $\left\{(a_k),(\alpha_k)\right\}$, $k=0,1,2,\ldots$, be a binomial-transform pair. Let $\mathcal L_x$ be a linear operator defined by $\mathcal L_x(x^j)=a_j$ for every complex number $x$ and every non-negative integer~$j$. Then $\mathcal L_x((1-x)^j)=\alpha_j$.
\end{lemma}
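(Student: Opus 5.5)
The plan is to reduce the statement to the binomial-transform inversion relation stated just above Lemma~\ref{qqdca04}. We are given $\mathcal L_x(x^j)=a_j$ for every $j\ge 0$, and $\mathcal L_x$ is linear.

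First, expand $(1-x)^j$ by the binomial theorem:
\begin{equation}
(1-x)^j=\sum_{k=0}^j(-1)^k\binom jk x^k .
\end{equation}
This is a finite linear combination of monomials, so linearity of $\mathcal L_x$ applies directly.

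Applying $\mathcal L_x$ term by term gives
\begin{equation}
\mathcal L_x\bigl((1-x)^j\bigr)=\sum_{k=0}^j(-1)^k\binom jk\,\mathcal L_x(x^k)=\sum_{k=0}^j(-1)^k\binom jk a_k .
\end{equation}

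Finally, since $\{(a_k),(\alpha_k)\}$ is a binomial-transform pair, the relation $a_n=\sum_{k=0}^n(-1)^k\binom nk\alpha_k$ is equivalent, by the inversion property stated before the lemma, to $\alpha_n=\sum_{k=0}^n(-1)^k\binom nk a_k$. With $n=j$, the last sum is exactly $\alpha_j$, which proves the claim.

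None of these steps is a real obstacle. The only point needing care is the direction of the pair convention in~\eqref{bt}. Because that relation is symmetric (self-inverse), it does not matter which of $(a_k)$ and $(\alpha_k)$ is taken as $s$ and which as $\sigma$. If one wanted to avoid quoting the inversion property, it can be checked directly: substitute one sum into the other and use
\begin{equation}
\sum_{k=m}^{n}(-1)^{k+m}\binom nk\binom km=\delta_{nm}.
\end{equation}
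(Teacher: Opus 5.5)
Your proof is correct and complete: expanding $(1-x)^j$ binomially, applying linearity of $\mathcal L_x$ to get $\sum_{k=0}^j(-1)^k\binom jk a_k$, and identifying this sum as $\alpha_j$ via the self-inverse property of the binomial transform is exactly what is needed. The paper does not reprove this lemma (it is quoted from the author's earlier work), and your argument is the standard proof of it.
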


\begin{theorem}
Let $\left\{(a_k),(\alpha_k)\right\}$, $k=0,1,2,\ldots$, be a binomial-transform pair. Then
\begin{equation}\label{bt1}
\sum_{k = 0}^n {( - 1)^{n - k} \binom{{n}}{k}2^{ - k} C_{k + 1}a_{n-k} }  = \sum_{k = 0}^{\left\lfloor {n/2} \right\rfloor } {\binom{{n}}{{2k}}2^{ - 2k} C_k \alpha_{n - 2k} } .
\end{equation}

\end{theorem}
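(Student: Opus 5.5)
We apply the linear operator of Lemma~\ref{qqdca04} to the polynomial identity~\eqref{cmtd13h}, which is the form of~\eqref{fe8atkt} obtained by writing $1/x$ for $x$:
\begin{equation}
\sum_{k = 0}^{\left\lfloor {n/2} \right\rfloor } {\binom{{n}}{{2k}}2^{ - 2k} C_k x^{n - 2k} }=\sum_{k = 0}^n {( - 1)^{n - k} \binom{{n}}{k}2^{ - k} C_{k + 1} \left( {1 - x} \right)^{n - k} } .
\end{equation}
Both sides are polynomials in $x$ of degree at most $n$, so they are finite linear combinations of powers of $x$ (respectively of $1-x$). A linear operator therefore acts on them term by term.

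The one subtlety is which direction of the transform pair to use. Lemma~\ref{qqdca04} states that if $\mathcal L_x(x^j)=a_j$ then $\mathcal L_x((1-x)^j)=\alpha_j$. The target identity~\eqref{bt1} has $a$ on the side carrying $(1-x)^{n-k}$ and $\alpha$ on the side carrying $x^{n-2k}$. So we use the symmetry of the binomial-transform relation: since $\{(a_k),(\alpha_k)\}$ is a pair, so is $\{(\alpha_k),(a_k)\}$. We then define $\mathcal L_x$ by $\mathcal L_x(x^j)=\alpha_j$ for every $j\ge 0$, extended linearly to polynomials. Lemma~\ref{qqdca04}, applied to the pair $\{(\alpha_k),(a_k)\}$, gives $\mathcal L_x((1-x)^j)=a_j$. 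This is also immediate directly: expanding $(1-x)^j=\sum_i(-1)^i\binom ji x^i$ gives $\mathcal L_x((1-x)^j)=\sum_i(-1)^i\binom ji\alpha_i=a_j$ by the inversion formula.

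Applying $\mathcal L_x$ to the left side of~\eqref{cmtd13h} gives $\sum_k\binom n{2k}2^{-2k}C_k\alpha_{n-2k}$. Applying it to the right side gives $\sum_k(-1)^{n-k}\binom nk 2^{-k}C_{k+1}a_{n-k}$. Equating the two yields~\eqref{bt1}.

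There is no real obstacle beyond confirming that~\eqref{cmtd13h} holds as a polynomial identity, so that $\mathcal L_x$ may be applied. It does: it comes from~\eqref{fe8atkt} by substituting $1/x$ and multiplying by $x^n$, and since both sides are polynomials, equality for all nonzero $x$ implies equality of polynomials.
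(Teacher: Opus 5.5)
Your proof is correct and is essentially the paper's argument: the paper replaces $x$ by $1-x$ in \eqref{cmtd13h} to obtain \eqref{k24wp0d} and applies Lemma~\ref{qqdca04} to the pair as given, whereas you keep \eqref{cmtd13h} and apply the lemma to the reversed pair $\{(\alpha_k),(a_k)\}$, which comes to the same thing. One small correction to your side remark, which repeats the paper's wording: \eqref{cmtd13h} is obtained from \eqref{fe8atkt} by writing $1/x$ for $1-x$ (not for $x$) and multiplying through by $x^n$; this does not affect your argument, since \eqref{cmtd13h} itself is correct.
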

\begin{proof}
Write~\eqref{cmtd13h} as
\begin{equation}\label{k24wp0d}
\sum_{k = 0}^n {( - 1)^{n - k} \binom{{n}}{k}2^{ - k} C_{k + 1} x^{n - k} }  = \sum_{k = 0}^{\left\lfloor {n/2} \right\rfloor } {\binom{{n}}{{2k}}2^{ - 2k} C_k \left( {1 - x} \right)^{n - 2k} } 
\end{equation}
and apply Lemma~\ref{qqdca04}.
\end{proof}

\begin{remark}
It is sometimes useful to write~\eqref{bt1} in the following form:
\begin{align}
&\sum_{k = 0}^{\left\lfloor {n/2} \right\rfloor } {\binom{{n}}{{2k}}2^{n - 2k} C_k a_{n - 2k} }  \\
&\qquad= \sum_{k = 0}^{\left\lfloor {n/2} \right\rfloor } {\binom{{n}}{{2k}}2^{2k} C_{n - 2k + 1} \alpha _{2k} }  - \sum_{k = 1}^{\left\lceil {n/2} \right\rceil } {\binom{{n}}{{2k - 1}}2^{2k - 1} C_{n - 2k + 2} \alpha _{2k - 1} }\label{bt2} .
\end{align}
\end{remark}

\begin{proposition}
If $n$ is a non-negative integer, then
\begin{equation}\label{pn5otn6}
\sum_{k = 0}^n {( - 1)^k \binom{{n}}{k}2^{ - k} C_{k + 1} } 
=\begin{cases}
 2^{ - n} C_{n/2},&\text{if $n$ is even;}  \\ 
 0,&\text{if $n$ is odd.} \\ 
 \end{cases}
\end{equation}
\end{proposition}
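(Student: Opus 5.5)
The plan is to specialize the binomial-transform theorem~\eqref{bt1}, or more directly its polynomial form~\eqref{k24wp0d}, at the value of $x$ that kills all but one term on the right-hand side. In~\eqref{k24wp0d} set $x=1$. Then $(1-x)^{n-2k}$ vanishes unless $n-2k=0$.

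If $n$ is odd, no such $k$ exists, so the right-hand side is $0$. If $n$ is even, only the term $k=n/2$ survives, giving
\begin{equation}
\binom{n}{n}2^{-n}C_{n/2}=2^{-n}C_{n/2}.
\end{equation}
The left-hand side at $x=1$ is
\begin{equation}
\sum_{k=0}^n(-1)^{n-k}\binom nk 2^{-k}C_{k+1}.
\end{equation}
This equals $(-1)^n$ times the sum in~\eqref{pn5otn6}.

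When $n$ is even, $(-1)^n=1$, and we obtain exactly $2^{-n}C_{n/2}$. When $n$ is odd, the left-hand side is $0$, so the sign is irrelevant. This proves~\eqref{pn5otn6} in both cases.

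Equivalently, in the language of~\eqref{bt1}, one takes $a_j=1$ for all $j$. Its binomial transform is $\alpha_j=\sum_k(-1)^k\binom jk=\delta_{j,0}$, and substituting these gives the same computation.

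The only point needing care is the convention $0^0=1$ at $x=1$, i.e.\ $(1-x)^0=1$. This holds because~\eqref{k24wp0d} is a polynomial identity in $x$, so evaluating at $x=1$ is legitimate. Beyond this there is no real obstacle; the rest is bookkeeping of the sign $(-1)^{n-k}=(-1)^n(-1)^k$.
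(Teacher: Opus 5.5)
Your proof is correct and is essentially the paper's argument: the paper substitutes the binomial-transform pair $\{a_k=1,\ \alpha_k=\delta_{k0}\}$ into \eqref{bt1}, and since the operator with $\mathcal L_x(x^j)=1$ is just evaluation at $x=1$, this is exactly your evaluation of \eqref{k24wp0d} at $x=1$. Your handling of the sign $(-1)^{n-k}=(-1)^n(-1)^k$ and of the parity split is correct.
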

\begin{proof}
Use the binomial-transform pair $\{a_k=1,\alpha_k=\delta_{k0}\}$ in~\eqref{bt1}. Here $\delta_{ij}$ is Kronecker's delta symbol.
\end{proof}

\begin{remark}
Identity~\eqref{pn5otn6} was obtained by Suleiman and Sury~\cite{sulei23}.
\end{remark}

\begin{proposition}
If $n$ is a non-negative integer, then
\begin{equation}\label{pr3ehv4}
\sum_{k = 0}^n {( - 1)^{n - k} \binom{{n}}{k}\frac{{2^{n - 2k} }}{{n - k + 1}}\,C_{k + 1} } 
 = \begin{cases}
 \sum\limits_{k = 0}^{n/2 } {\dbinom{{n}}{{2k}}\dfrac{{2^{ - 2k} }}{{n - 2k + 1}}\,C_k },&\text{if $n$ is even;}  \\ 
 0,&\text{if $n$ is odd;} \\ 
 \end{cases}
\end{equation}
and
\begin{align}
&\sum_{k = 0}^n {( - 1)^{n - k} \binom{{n}}{k}\frac{{2^{n - 2k} }}{{n - k + 1}}\,C_{k + 1}H_{n-k} }\\ 
&\qquad = \begin{cases}
 -2\sum\limits_{k = 0}^{(n-1)/2 } {\dbinom{{n}}{{2k}}\dfrac{{2^{ - 2k} }}{{n - 2k + 1}}\,C_kO_{(n-2k+1)/2} },&\text{if $n$ is odd;}  \\ 
 0,&\text{if $n$ is even.} \label{e4himaz}
 \end{cases}
\end{align}

\end{proposition}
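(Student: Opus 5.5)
The plan is to derive both identities from the binomial-transform identity~\eqref{bt1}, choosing suitable pairs $\{(a_k),(\alpha_k)\}$. The point is that $2^{n-2k}=2^{-k}\,2^{n-k}$, so the left-hand side of~\eqref{pr3ehv4} is exactly the left-hand side of~\eqref{bt1} with
\begin{equation}
a_j=\frac{2^j}{j+1}.
\end{equation}
For~\eqref{e4himaz} the same holds with
\begin{equation}
a_j=\frac{2^jH_j}{j+1}.
\end{equation}
In each case I compute the binomial transform $\alpha_j=\sum_{i=0}^j(-1)^i\binom ji a_i$ in closed form and substitute it into the right-hand side of~\eqref{bt1}.

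For~\eqref{pr3ehv4} I use $\binom ji\frac1{i+1}=\frac1{j+1}\binom{j+1}{i+1}$, which gives
\begin{equation}
\alpha_j=\frac{1}{j+1}\sum_{i=0}^j\binom{j+1}{i+1}(-2)^i=\frac{(1-2)^{j+1}-1}{-2(j+1)}=\frac{1-(-1)^{j+1}}{2(j+1)}.
\end{equation}
So $\alpha_j=1/(j+1)$ for even $j$ and $\alpha_j=0$ for odd $j$. On the right of~\eqref{bt1} we need $\alpha_{n-2k}$, and $n-2k$ has the parity of $n$. For odd $n$ the whole sum therefore vanishes. For even $n$ it becomes $\sum_k\binom n{2k}2^{-2k}C_k/(n-2k+1)$, which is~\eqref{pr3ehv4}.

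For~\eqref{e4himaz} the same rewriting gives, with $N=j+1$ and $H_{m-1}=H_m-1/m$,
\begin{equation}
\alpha_j=-\frac{1}{2N}\sum_{m=1}^{N}\binom Nm(-2)^m\Bigl(H_m-\frac1m\Bigr).
\end{equation}
I evaluate the two pieces with the standard generating identities
\begin{equation}
\sum_{m=0}^N\binom Nm x^mH_m=(1+x)^NH_N-\sum_{k=1}^N\frac{(1+x)^{N-k}}{k},
\end{equation}
\begin{equation}
\sum_{m=1}^N\binom Nm\frac{x^m}{m}=\sum_{k=1}^N\frac{(1+x)^k-1}{k}.
\end{equation}
These follow by induction on $N$, or by writing $H_m=\int_0^1\frac{1-t^m}{1-t}\,dt$. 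At $x=-2$ all the powers $(1+x)^{\cdot}$ become signs $\pm1$. The harmonic sums then collapse to alternating sums $\sum_{k\le N}(-1)^k/k$, which combine into odd harmonic numbers. The expected outcome is $\alpha_j=0$ for even $j$ and $\alpha_j=-2\,O_{(j+1)/2}/(j+1)$ for odd $j$. Inserting this into~\eqref{bt1} and using the parity of $n-2k$ as before gives~\eqref{e4himaz}.

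The main obstacle is this last evaluation. The $H_N$ terms and the two alternating sums must be tracked separately in the cases $N$ even and $N$ odd. One must check that everything cancels when $N$ is odd (that is, $j$ even), and that for $N$ even what survives is exactly $-2\,O_{N/2}/N$. The useful relation is $\sum_{k=1}^{N}(-1)^{k-1}/k=H_N-H_{N/2}$ for even $N$, together with $H_{2m}-\tfrac12H_m=O_m$. If the closed forms become unwieldy, I would instead verify the claimed $\alpha_j$ by checking that its inverse binomial transform reproduces $2^jH_j/(j+1)$, using the recurrence in $j$.
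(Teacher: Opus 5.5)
Your proposal is correct and matches the paper's proof: the paper also substitutes $a_k=2^k/(k+1)$ and $a_k=2^kH_k/(k+1)$ into~\eqref{bt1} and then uses the parity of $n-2k$. The only difference is that the paper imports the two transforms $\alpha_k$ from the author's earlier work, whereas you compute them, and your pending evaluation does close: at $x=-2$ your two generating identities give $\sum_{m=1}^N\binom Nm(-2)^m\bigl(H_m-\frac1m\bigr)=\bigl(1+(-1)^N\bigr)\bigl(H_N+\sum_{k=1}^N\frac{(-1)^{k-1}}{k}\bigr)$, which vanishes for odd $N$ and equals $4O_{N/2}$ for even $N$, so $\alpha_j=-2O_{(j+1)/2}/(j+1)$ exactly as you claimed.
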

\begin{proof}
The second identity in~\cite[Proposition 13.3]{adegoke25} is
\begin{equation}\label{n2x6xs0}
\sum_{k = 0}^n {( - 1)^k \binom{{n}}{k}\frac{{2^k }}{{k + 1}}} 
=\begin{cases}
 \dfrac1{n + 1},&\text{if $n$ is even;}  \\ 
 0,&\text{if $n$ is odd;}  \\ 
 \end{cases} 
\end{equation}
where the binomial-transform pair $(a_k)$ and $(\alpha_k)$, $k=0,1,2,\ldots$, with 
\begin{equation}\label{f8pnqqh}
a_k=\frac{2^k}{k+1}\text{ and }\alpha_k=\frac{1+(-1)^k}{2\,\left(k+1\right)},
\end{equation}
can be identified. Use these in~\eqref{bt1} to obtain~\eqref{pr3ehv4}. From~\cite[Proposition 13.4]{adegoke25}:
\begin{equation}
\sum_{k = 0}^n {( - 1)^k \binom{{n}}{k}\frac{{2^k H_k }}{{k + 1}}}
=\begin{cases}
 0,&\text{if $n$ is even;} \\ 
  - \dfrac{{2O_{(n + 1)/2} }}{{n + 1}},&\text{if $n$ is odd;} \\ 
 \end{cases} 
\end{equation}
we recognize the binomial-transform pair $(a_k)$ and $(\alpha_k)$, $k=0,1,2,\ldots$, where
\begin{equation}\label{lfr23eo}
a_k=\frac{2^kH_k}{k+1}\text{ and }\alpha_k=-\frac{1-(-1)^k}{\,\left(k+1\right)}\,O_{(k+1)/2}.
\end{equation}
Use of these in~\eqref{bt1} gives~\eqref{e4himaz}.
\end{proof}

\begin{proposition}
If $n$ is a non-negative integer, then
\begin{equation}\label{mydk3ex}
\sum_{k = 0}^{\left\lfloor {n/2} \right\rfloor } {\binom{{n}}{{2k}}\frac{{2^{2\left( {n - 2k} \right)} }}{{n - 2k + 1}}\,C_k }  = \sum_{k = 0}^{\left\lfloor {n/2} \right\rfloor } {\binom{{n}}{{2k}}\frac{{2^{2k} }}{{2k + 1}}\,C_{n - 2k + 1} } 
\end{equation}
and
\begin{equation}\label{eaj49tl}
\sum_{k = 0}^{\left\lfloor {n/2} \right\rfloor } {\binom{{n}}{{2k}}\frac{{2^{2\left( {n - 2k} \right)} }}{{n - 2k + 1}}\,C_k H_{n - 2k} }  = \frac{1}{2}\sum_{k = 1}^{\left\lceil {n/2} \right\rceil } {\binom{{n}}{{2k - 1}}\frac{{2^{2k} }}{k}\,C_{n - 2k + 2} O_k } .
\end{equation}
\end{proposition}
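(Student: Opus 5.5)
Both identities follow from the binomial-transform form \eqref{bt2} of \eqref{bt1}. We feed into it the two binomial-transform pairs already identified in the proof of the preceding proposition, namely \eqref{f8pnqqh} and \eqref{lfr23eo}, which come from \cite[Propositions 13.3 and 13.4]{adegoke25}. The only real work is matching the left side of \eqref{bt2} with the given left sides, and checking how the even/odd structure of $\alpha_k$ kills one of the two sums on the right of \eqref{bt2}.

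For \eqref{mydk3ex}, take $a_k=2^k/(k+1)$ and $\alpha_k=(1+(-1)^k)/(2(k+1))$.
\begin{itemize}
\item On the left of \eqref{bt2}, the factor $2^{n-2k}a_{n-2k}$ equals $2^{2(n-2k)}/(n-2k+1)$, which is exactly the left side of \eqref{mydk3ex}.
\item On the right, $\alpha_{2k}=1/(2k+1)$ and $\alpha_{2k-1}=0$.
\item Hence the subtracted sum over odd indices vanishes, and the first sum becomes $\sum_k\binom{n}{2k}\frac{2^{2k}}{2k+1}C_{n-2k+1}$, as claimed.
\end{itemize}

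For \eqref{eaj49tl}, take $a_k=2^kH_k/(k+1)$ and $\alpha_k=-\frac{1-(-1)^k}{k+1}O_{(k+1)/2}$.
\begin{itemize}
\item The left side of \eqref{bt2} now becomes $\sum_k\binom{n}{2k}\frac{2^{2(n-2k)}}{n-2k+1}C_kH_{n-2k}$.
\item On the right, $\alpha_{2k}=0$, so the even-index sum drops out.
\item For odd indices, $\alpha_{2k-1}=-\frac{2}{2k}O_k=-O_k/k$.
\item The surviving term is therefore $-\sum_{k=1}^{\lceil n/2\rceil}\binom{n}{2k-1}2^{2k-1}C_{n-2k+2}(-O_k/k)=\frac12\sum_{k=1}^{\lceil n/2\rceil}\binom{n}{2k-1}\frac{2^{2k}}{k}C_{n-2k+2}O_k$, which is the stated right side.
\end{itemize}

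The one step needing care is confirming that \eqref{bt2} really follows from \eqref{bt1}, since the remark states it without proof. To check it, I would apply \eqref{bt1} to the swapped pair $\{(\alpha_k),(a_k)\}$, which is again a binomial-transform pair by the self-inverse property. In that identity, multiply through by $2^n$ and substitute $k\mapsto n-k$ in the Catalan sum, so the factor $(-1)^{n-k}2^{-k}C_{k+1}$ becomes $(-1)^{k}2^{k}C_{n-k+1}$ after scaling. Finally, split that sum by the parity of $k$. The rest is bookkeeping with powers of $2$ and the index ranges $\lfloor n/2\rfloor$ and $\lceil n/2\rceil$.
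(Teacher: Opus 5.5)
Your proposal is correct and follows the paper's own argument: the paper also substitutes the pairs \eqref{f8pnqqh} and \eqref{lfr23eo} into \eqref{bt2}, and uses the vanishing of $\alpha_{2k-1}$ (respectively $\alpha_{2k}$) exactly as you do. Your additional derivation of \eqref{bt2} from \eqref{bt1} is also correct, and it fills in a step the paper leaves unproved: apply \eqref{bt1} to the swapped pair, multiply by $2^n$, reverse the index, and split by parity.
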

\begin{proof}
Use~\eqref{f8pnqqh} in~\eqref{bt2} to obtain~\eqref{mydk3ex} and~\eqref{lfr23eo} to get~\eqref{eaj49tl}.
\end{proof}

\section{Additional results}
The methods described in this paper can also be used to derive generalizations of another well-known combinatorial identity, Knuth's old sum or the Reed Dawson identity:
\begin{equation}\label{knuth}
\sum_{k = 0}^n {( - 1)^k \binom{{n}}{k}2^{ - k} \binom{{2k}}{k}}
 =  \begin{cases}
 2^{ - n} \binom{{n}}{n/2}, &\text{if $n$ is even};\\ 
 0,&\text{if $n$ is odd}. \\ 
 \end{cases} 
\end{equation}
Identity~\eqref{knuth} occurs at $x=0$ in the following polynomial identity~\cite[Equation (6)]{adegoke26}:
\begin{equation}\label{l5xib79}
\sum_{k = 0}^n {( - 1)^{n - k} \binom{{n}}{k}2^{ - k} \binom{{2k}}{k}\left( {1 - x} \right)^{n - k} }  = \sum_{k = 0}^{\left\lfloor {n/2} \right\rfloor } {\binom{{n}}{{2k}}2^{ - 2k} \binom{{2k}}{k}x^{n - 2k} }.
\end{equation}
Starting with the following variation on~\eqref{l5xib79}:
\begin{equation}
\sum_{k = 0}^n {( - 1)^k \binom{{n}}{k}2^{ - k} \binom{{2k}}{k}x^k }  = \sum_{k = 0}^{\left\lfloor {n/2} \right\rfloor } {\binom{{n}}{{2k}}2^{ - 2k} \binom{{2k}}{k}x^{2k} \left( {1 - x} \right)^{n - 2k} }, 
\end{equation}
and proceeding as in the proof of Theorem~\ref{thm.touchard} yields the following generalization of Knuth's old sum.
\begin{theorem}\label{knuth-sum}
If $r$ and $s$ are complex numbers that are not negative integers, then
\begin{equation}\label{knuth-gen}
\sum_{k = 0}^n {( - 1)^k \binom{{n}}{k}2^{ - k} \binom{{2k}}{k}\binom{{k + r}}{s}^{-1}}  = \frac{s}{{n + r}}\sum_{k = 0}^{\left\lfloor {n/2} \right\rfloor } {\binom{{n}}{{2k}}2^{ - 2k} \binom{{2k}}{k}\binom{{n + r - 1}}{{2k + r - s}}^{-1}} .
\end{equation}
\end{theorem}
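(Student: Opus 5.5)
The plan follows the proof of Theorem~\ref{thm.touchard}: multiply the stated polynomial variation of~\eqref{l5xib79} by a Beta-type weight and integrate termwise over $[0,1]$ using Lemma~\ref{integrals}. Concretely, I would start from
\begin{equation}
\sum_{k = 0}^n {( - 1)^k \binom{{n}}{k}2^{ - k} \binom{{2k}}{k}x^k }  = \sum_{k = 0}^{\left\lfloor {n/2} \right\rfloor } {\binom{{n}}{{2k}}2^{ - 2k} \binom{{2k}}{k}x^{2k} \left( {1 - x} \right)^{n - 2k} }
\end{equation}
and multiply both sides by $x^{r-s}(1-x)^{s-1}$. Each term on the left becomes $x^{k+r-s}(1-x)^{s-1}$, and each term on the right becomes $x^{2k+r-s}(1-x)^{n-2k+s-1}$.

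On the left, the first formula of Lemma~\ref{integrals} gives
\begin{equation}
\int_0^1 x^{r+k-s}(1-x)^{s-1}\,dx=\frac1s\binom{k+r}{s}^{-1}.
\end{equation}
On the right, I would use the second formula of Lemma~\ref{integrals} with $k\to 2k$, $s\to r-s$, and $r\to n+s-1$. Then $r+s\to n+r-1$, the exponent $r-k\to n+s-1-2k$ matches, and the prefactor is $1/(n+r)$. This yields
\begin{equation}
\int_0^1 x^{2k+r-s}(1-x)^{n-2k+s-1}\,dx=\frac1{n+r}\binom{n+r-1}{2k+r-s}^{-1}.
\end{equation}
Equating the integrated sides and multiplying through by $s$ produces exactly~\eqref{knuth-gen}.

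The main point needing care is convergence. Both sides are finite sums, so termwise integration is harmless once each integral converges. That requires $\Re(s)>0$ and $\Re(r-s+k)>-1$ for all relevant $k$, i.e.\ $\Re(r-s)>-1$. Under these conditions the identity holds on an open region of $(r,s)$. Both sides are then meromorphic in $r,s$ (built from Gamma ratios), so analytic continuation extends the identity to all complex $r,s$ away from the negative integers. At $s=0$ the right side is interpreted as a limit, as in~\eqref{pnfixx7}.

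I expect the only real obstacle to be the parameter matching in the second integral formula, which is a routine check, together with stating the continuation argument cleanly.
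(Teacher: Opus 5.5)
Your proposal is correct and is the paper's own argument: multiply the stated variation of~\eqref{l5xib79} by $x^{r-s}(1-x)^{s-1}$ and integrate termwise over $[0,1]$ using Lemma~\ref{integrals}, and your parameter substitution in the second Beta formula checks out. Your convergence conditions ($\Re(s)>0$, $\Re(r-s)>-1$) and the analytic-continuation step add rigor the paper leaves implicit.
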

Equation~\eqref{knuth-gen} reduces to Knuth's old sum~\eqref{knuth} at $s=0$, while at $r=1=s$ we obtain the following binomial-transform identity:
\begin{equation}\label{it0ghdj}
\sum_{k = 0}^n {( - 1)^k \binom{{n}}{k}2^{ - k} C_k }  = 2^{ - n} \binom{{n}}{{\left\lfloor {n/2} \right\rfloor }},
\end{equation}
after using the fact that
\begin{equation}
\sum_{r = 0}^m {2^{ - 2r} \binom{{2r}}{r}}  = \sum_{r = 0}^m {\binom{{r - 1/2}}{r}}  = \binom{{m + 1/2}}{m} = \left( {2m + 1} \right)2^{ - 2m} \binom{{2m}}{m}.
\end{equation}
\begin{remark}
Identity~\eqref{it0ghdj} was derived on the MathWorld website~\cite{mathworld}.
\end{remark}
Choosing $r=0$ and $s=1/2$ in~\eqref{knuth-gen} and using
\begin{equation}
\binom k{1/2}=\frac{2^{2k+1}}\pi\,\binom{2k}k^{-1}
\end{equation}
and
\begin{equation}
\binom{{n - 1}}{{2k - 1/2}} = \frac{{2^{2n} }}{{n\pi }}\binom{{n}}{{2k}}\binom{{4k}}{{2k}}^{ - 1} \binom{{2\left( {n - 2k} \right)}}{{n - 2k}}^{ - 1} ,
\end{equation}
gives our next result.
\begin{proposition}
If $n$ is a non-negative integer, then
\begin{equation}
\sum_{k = 0}^n {( - 1)^k \binom{{n}}{k}2^{ - 3k} \binom{{2k}}{k}^2 }  = \frac{1}{{2^{2n} }}\sum_{k = 0}^{\left\lfloor {n/2} \right\rfloor } {2^{ - 2k} \binom{{2k}}{k}\binom{{4k}}{{2k}}\binom{{2\left( {n - 2k} \right)}}{{n - 2k}}} .
\end{equation}
\end{proposition}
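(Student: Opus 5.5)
Following the paper's hint, I specialize Theorem~\ref{knuth-sum} at $r=0$ and $s=1/2$, then convert both sides into purely rational expressions using the two stated half-integer binomial evaluations.

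\emph{Left-hand side.} With $r=0$, $s=1/2$ the summand is $(-1)^k\binom nk2^{-k}\binom{2k}k\binom k{1/2}^{-1}$. Since $\binom k{1/2}^{-1}=\frac{\pi}{2^{2k+1}}\binom{2k}k$, the left side becomes
\[
\frac{\pi}{2}\sum_{k=0}^n(-1)^k\binom nk2^{-3k}\binom{2k}k^2 .
\]
\emph{Right-hand side.} Here the prefactor is $\frac{s}{n+r}=\frac1{2n}$, and the binomial is $\binom{n-1}{2k-1/2}$. Using the stated evaluation, its reciprocal is $\frac{n\pi}{2^{2n}}\binom n{2k}^{-1}\binom{4k}{2k}\binom{2(n-2k)}{n-2k}$. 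Multiplying by $\binom n{2k}2^{-2k}\binom{2k}k$ cancels the $\binom n{2k}$, and the factor $n$ cancels against $\frac1{2n}$. So the right side is
\[
\frac{\pi}{2^{2n+1}}\sum_k 2^{-2k}\binom{2k}k\binom{4k}{2k}\binom{2(n-2k)}{n-2k}.
\]
Dividing both sides by $\pi/2$ gives the claimed identity.

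\emph{Checking the half-integer evaluations.} For the record, I would verify these from \eqref{y89d722} together with \eqref{v7g54vl} and \eqref{ho6212z}.
\begin{itemize}
\item For the first, $\binom k{1/2}=\Gamma(k+1)/(\Gamma(3/2)\Gamma(k+1/2))$. Substituting $\Gamma(3/2)=\sqrt\pi/2$ and $\Gamma(k+1/2)=\sqrt\pi\,2^{-2k}\binom{2k}k k!$ gives $\frac{2^{2k+1}}{\pi}\binom{2k}k^{-1}$.
\item For the second, $\binom{n-1}{2k-1/2}=\frac{(n-1)!}{\Gamma(2k+1/2)\Gamma(n-2k+1/2)}$. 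Applying \eqref{v7g54vl} to both Gamma factors gives $\frac{2^{2n}(n-1)!}{\pi (2k)!(n-2k)!}\binom{4k}{2k}^{-1}\binom{2(n-2k)}{n-2k}^{-1}$, which is the stated form.
\end{itemize}

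\emph{Obstacle: the case $n=0$.} The only delicate point is $n=0$, where $\frac{s}{n+r}$ with $r=0$ degenerates; the identity there is trivially $1=1$. For $n\ge1$, Theorem~\ref{knuth-sum} applies directly, since $r=0$ and $s=1/2$ are not negative integers and the Beta integrals used in its proof converge for $k\ge0$ ($r+k-s>-1$, $s-1>-1$).
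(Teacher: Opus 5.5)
Your proposal is correct and follows the paper's own argument: specialize Theorem~\ref{knuth-sum} at $r=0$, $s=1/2$, and rewrite both sides using the two half-integer binomial evaluations, which you verify correctly from \eqref{v7g54vl}. Your direct check of $n=0$, where the prefactor $s/(n+r)$ degenerates, is a detail the paper leaves implicit.
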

The following variation on Equation~\eqref{l5xib79}:
\begin{equation}\label{twfyw7o}
\sum_{k = 0}^n {( - 1)^{n - k} \binom{{n}}{k}2^{ - k} \binom{{2k}}{k}x^{n - k} }  = \sum_{k = 0}^{\left\lfloor {n/2} \right\rfloor } {\binom{{n}}{{2k}}2^{ - 2k} \binom{{2k}}{k}\left(1-x\right)^{n - 2k} },
\end{equation}
and the application of Lemma~\ref{qqdca04} leads to another generalization of Knuth's old sum.

\begin{theorem}\label{knuth-gen-2}
If $n$ and $r$ are non-negative integers, then
\begin{align}
&\sum_{k = 0}^n {( - 1)^k \binom{{n}}{k}2^{ - k} \binom{{2k}}{k}\left( {n - k} \right)^r }\\  
&\qquad=\begin{cases}
 \sum\limits_{k = 0}^{\left\lfloor {r/2} \right\rfloor } {\binom{{n}}{{2k}}2^{ - (n - 2k)} \binom{{n - 2k}}{{\left( {n - 2k} \right)/2}}(2k)!\braces{{ r}}{{2k}}},&\text{if $n$ is even;}  \\ 
   \sum\limits_{k = 1}^{\left\lceil {r/2} \right\rceil } {\binom{{n}}{{2k-1}}2^{ - (n - 2k + 1)} \binom{{n - 2k + 1}}{{\left( {n - 2k + 1} \right)/2}}(2k - 1)!\braces{{ r}}{{2k - 1}}},&\text{if $n$ is odd.} 
 \end{cases}
\end{align}
\end{theorem}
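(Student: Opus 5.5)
The plan mirrors the proof of the Stirling-number generalization of Touchard's identity, now starting from \eqref{twfyw7o}. First I will replace $x$ by $e^x$ in \eqref{twfyw7o}, which gives
\begin{equation}
\sum_{k = 0}^n {( - 1)^{n - k} \binom{{n}}{k}2^{ - k} \binom{{2k}}{k}e^{(n - k)x} }  = \sum_{k = 0}^{\left\lfloor {n/2} \right\rfloor } {\binom{{n}}{{2k}}2^{ - 2k} \binom{{2k}}{k}\left(1-e^x\right)^{n - 2k} }.
\end{equation}
Next I will differentiate $r$ times with respect to $x$ and evaluate at $x=0$. On the left this yields $\sum_{k}(-1)^{n-k}\binom nk 2^{-k}\binom{2k}k(n-k)^r$. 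On the right, Lemma~\ref{m-derivatives} gives
\begin{equation}
\left.\frac{d^r}{dx^r}(1-e^x)^{n-2k}\right|_{x=0}=(-1)^{n-2k}(n-2k)!\braces{r}{n-2k}=(-1)^n(n-2k)!\braces{r}{n-2k}.
\end{equation}
Multiplying through by $(-1)^n$ then gives
\begin{equation}
\sum_{k = 0}^n {( - 1)^{k} \binom{{n}}{k}2^{ - k} \binom{{2k}}{k}(n-k)^r } = \sum_{k = 0}^{\left\lfloor {n/2} \right\rfloor } {\binom{{n}}{{2k}}2^{ - 2k} \binom{{2k}}{k}(n-2k)!\braces{r}{n-2k} }.
\end{equation}

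The remaining task is to reindex the right-hand side to match the stated form. I will set $j=n-2k$, so that $j$ has the parity of $n$. Using $\binom{n}{2k}=\binom{n}{n-2k}$ and writing $2^{-2k}\binom{2k}{k}$ in terms of $j$ produces the terms
\begin{equation}
\binom nj\,2^{-(n-j)}\binom{n-j}{(n-j)/2}\,j!\braces rj .
\end{equation}
Only $j\le r$ contribute, since $\braces rj=0$ for $j>r$. For $n$ even, write $j=2k$; $k$ runs from $0$ to $\lfloor r/2\rfloor$, and terms with $2k>n$ vanish through $\binom n{2k}$. For $n$ odd, write $j=2k-1$; $k$ runs from $1$ to $\lceil r/2\rceil$. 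This is exactly the two-case formula.

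The only delicate point is the bookkeeping of signs and of summation ranges, in particular the convention $\braces{0}{0}=1$ at $r=0$. I expect no real obstacle beyond checking that the upper limits $\lfloor r/2\rfloor$ and $\lceil r/2\rceil$ capture all nonzero terms. One could equally apply Lemma~\ref{qqdca04} with the pair $a_j=j^r$, $\alpha_j=(-1)^j j!\braces rj$, which follows from the Stirling explicit formula; I will mention this as an alternative.
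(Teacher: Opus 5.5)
Your proof is correct and is essentially the paper's argument: the paper obtains the theorem from \eqref{twfyw7o} by applying Lemma~\ref{qqdca04} (implicitly with $a_j=j^r$, $\alpha_j=(-1)^j j!\braces{r}{j}$), and your map $p(x)\mapsto\left.\frac{d^r}{dt^r}p(e^t)\right|_{t=0}$ is exactly such a linear functional, sending $x^j\mapsto j^r$ and, by Lemma~\ref{m-derivatives}, $(1-x)^j\mapsto(-1)^j j!\braces{r}{j}$. Your sign bookkeeping and your reindexing $j=n-2k$ into the two parity cases are also correct, with the ranges justified by $\braces{r}{j}=0$ for $j>r$ and $\binom{n}{j}=0$ for $j>n$, and they supply details the paper leaves implicit.
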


Next, using~\eqref{twfyw7o} and Lemma~\ref{qqdca04}, we present a related result to Knuth's old sum involving a binomial-transform pair.
\begin{theorem}
Let $\left\{(a_k),(\alpha_k)\right\}$, $k=0,1,2,\ldots$, be a binomial-transform pair. Then
\begin{equation}
\sum_{k = 0}^n {( - 1)^{n - k} \binom{{n}}{k}2^{ - k} \binom{{2k}}{k}a_{n - k} }  = \sum_{k = 0}^{\left\lfloor {n/2} \right\rfloor } {\binom{{n}}{{2k}}2^{ - 2k} \binom{{2k}}{k}\alpha_{n - 2k} }.
\end{equation}

\end{theorem}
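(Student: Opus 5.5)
The plan is to treat \eqref{twfyw7o} as a polynomial identity in $x$ and apply the linear operator of Lemma~\ref{qqdca04} to both sides. The preceding text says this is the route, and it is the same one used for \eqref{bt1} from \eqref{k24wp0d}.

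Let $\{(a_k),(\alpha_k)\}$ be a binomial-transform pair, and let $\mathcal L_x$ be the linear functional on polynomials in $x$ defined by $\mathcal L_x(x^j)=a_j$ for all $j\ge 0$. This is well defined because the monomials form a basis of the polynomial space, so prescribing $\mathcal L_x$ on them and extending linearly gives a unique linear map. Both sides of \eqref{twfyw7o} are polynomials in $x$ of degree at most $n$, and \eqref{twfyw7o} holds for every $x$, so the two sides are the same polynomial. Applying $\mathcal L_x$ to each side therefore gives equal values.

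On the left-hand side, $\mathcal L_x$ acts only on the factor $x^{n-k}$, because the coefficients $(-1)^{n-k}\binom nk 2^{-k}\binom{2k}k$ are constants. By linearity the left side becomes
\[
\sum_{k=0}^n(-1)^{n-k}\binom nk 2^{-k}\binom{2k}k\,a_{n-k}.
\]
On the right-hand side, $\mathcal L_x$ acts on $(1-x)^{n-2k}$, and Lemma~\ref{qqdca04} gives $\mathcal L_x((1-x)^{n-2k})=\alpha_{n-2k}$. By linearity the right side becomes
\[
\sum_{k=0}^{\lfloor n/2\rfloor}\binom n{2k}2^{-2k}\binom{2k}k\,\alpha_{n-2k}.
\]
Equating the two results gives the theorem.

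There is no substantial obstacle here. The only points needing care are that \eqref{twfyw7o} is an identity of polynomials, so a linear functional may be applied termwise, and that Lemma~\ref{qqdca04} is used for each exponent $j=n-2k\ge 0$ separately. Since \eqref{twfyw7o} is taken as given, being obtained from \eqref{l5xib79} by the substitution $x\mapsto 1-x$, the whole argument reduces to this linearity step.
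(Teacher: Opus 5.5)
Your proposal is correct and follows the paper's own route. The paper also obtains this theorem by applying the linear operator of Lemma~\ref{qqdca04} to the polynomial identity~\eqref{twfyw7o}, exactly as it derived~\eqref{bt1} from~\eqref{k24wp0d}.
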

Finally, by considering the following variation on identity~\eqref{k24wp0d}:
\begin{equation}
\sum_{k = 0}^n {( - 1)^k \binom{{n}}{k}2^{ - k} C_{k + 1} x^k }  = \sum_{k = 0}^{\left\lfloor {n/2} \right\rfloor } {\binom{{n}}{{2k}}2^{ - 2k} C_k x^{2k}\left( {1 - x} \right)^{n - 2k} }, 
\end{equation}
and working as in the proof of Theorem~\ref{thm.touchard}, we derive the next combinatorial identity.
\begin{theorem}\label{thm.mzx3fur}
If $n$ is a non-negative integer and $r$ and $s$ are complex numbers that are not negative integers, then
\begin{equation}
\sum_{k = 0}^n {( - 1)^k \binom{{n}}{k}2^{ - k} C_{k + 1} \binom{{k + r}}{s}^{ - 1} }  = \frac{s}{{n + r}}\sum_{k = 0}^{\left\lfloor {n/2} \right\rfloor } {\binom{{n}}{{2k}}2^{ - 2k} C_k \binom{{n + r - 1}}{{2k + r - s}}} ^{ - 1} .
\end{equation}
In particular, setting $r=2$, $s=3/2$ and using the fact that
\begin{equation}
\binom{k+2}{3/2}=\frac{16}{3\pi}\frac{2^{2k}}{C_{k+1}}
\end{equation}
and
\begin{equation}
\binom{{n + 1}}{{2k + 1/2}} = \frac{{2^{2n + 2} }}{\pi }\frac{{n + 1}}{{\left( {2n - 4k + 1} \right)\left( {4k + 1} \right)}}\binom{{n}}{{2k}}\binom{{4k}}{{2k}}^{ - 1} \binom{{2\left( {n - 2k} \right)}}{{n - 2k}}^{ - 1}, 
\end{equation}
we obtain
\begin{align}
&\sum_{k = 0}^n {( - 1)^k \binom{{n}}{k}2^{ - 3k} C_{k + 1}^2 }\\
&\qquad  = \frac{1}{{\left( {n + 1} \right)\left( {n + 2} \right)2^{2n-1} }}\sum_{k = 0}^{\left\lfloor {n/2} \right\rfloor } {\left( {2n - 4k + 1} \right)\left( {4k + 1} \right)2^{ - 2k} C_k \binom{{4k}}{{2k}}\binom{{2\left( {n - 2k} \right)}}{{n - 2k}}} .
\end{align}
\end{theorem}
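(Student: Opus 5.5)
Following the proof of Theorem~\ref{thm.touchard}, the plan is to multiply the stated polynomial variation of~\eqref{k24wp0d},
\begin{equation}
\sum_{k = 0}^n {( - 1)^k \binom{{n}}{k}2^{ - k} C_{k + 1} x^k }  = \sum_{k = 0}^{\left\lfloor {n/2} \right\rfloor } {\binom{{n}}{{2k}}2^{ - 2k} C_k x^{2k}\left( {1 - x} \right)^{n - 2k} },
\end{equation}
by the weight $x^{r-s}(1-x)^{s-1}$, integrate termwise over $[0,1]$, and evaluate both sides with Lemma~\ref{integrals}. First, I will confirm the polynomial identity itself. It follows from~\eqref{k24wp0d} by substituting $x\mapsto 1-1/x$, multiplying by $x^n$, and then rescaling. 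Alternatively, take~\eqref{fe8atkt}, replace $x$ by $-x/(1-x)$, and multiply by $(1-x)^n$. Since the paper states this variation, I take it as given.

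On the left, the integral $\int_0^1 x^{k+r-s}(1-x)^{s-1}\,dx$ is the first formula of Lemma~\ref{integrals}, and it equals $\frac1s\binom{k+r}{s}^{-1}$. On the right, the integrand is $x^{2k+r-s}(1-x)^{n-2k+s-1}$. This is the second formula of Lemma~\ref{integrals} with its parameters renamed. Set $s\to r-s$ and $r\to n+s-1$, and use $2k$ as the index. The resulting exponents are $y^{(r-s)+2k}(1-y)^{(n+s-1)-2k}$, so the integral equals $\frac{1}{n+r}\binom{n+r-1}{2k+r-s}^{-1}$. Multiplying through by $s$ then gives the stated identity.

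The main obstacle is justifying the termwise integration for complex $r,s$. The Beta integrals converge only when $\Re(k+r-s)>-1$ and $\Re(s)>0$. I would first prove the identity in this region. Both sides are finite sums of meromorphic functions of $(r,s)$ built from Gamma quotients, so the identity extends by analytic continuation to all $r,s$ that avoid the negative-integer poles, as in Theorem~\ref{thm.touchard}.

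For the particular case, I would set $r=2$ and $s=3/2$ and substitute the two displayed Gamma evaluations. The first is $\binom{k+2}{3/2}=\frac{16}{3\pi}\frac{2^{2k}}{C_{k+1}}$. Together with $C_{k+1}$, it produces $\frac{3\pi}{16}2^{-3k}C_{k+1}^2$ on the left. For the second evaluation I would check it directly via~\eqref{v7g54vl} and~\eqref{ho6212z}: writing $\binom{n+1}{2k+1/2}=\Gamma(n+2)/(\Gamma(2k+3/2)\Gamma(n-2k+3/2))$ and applying~\eqref{v7g54vl} to each half-integer Gamma gives the stated form. Inverting it cancels $\pi$ against the left side. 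The prefactor is $\frac{s}{n+r}=\frac{3}{2(n+2)}$. Collecting the powers of $2$ then gives the factor $1/((n+1)(n+2)2^{2n-1})$, and the remaining terms are routine bookkeeping.
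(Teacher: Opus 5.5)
Your proposal is correct and is the paper's argument: multiply the stated variation by $x^{r-s}(1-x)^{s-1}$, integrate termwise with Lemma~\ref{integrals}, and specialize. Your reparametrization of the second Beta formula checks out, as does your bookkeeping at $r=2$, $s=3/2$, including $16/2^{2n+3}=2^{1-2n}$.

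One harmless slip is in your optional check of the variation, where neither substitution you name produces it. The map $x\mapsto 1-1/x$ in~\eqref{k24wp0d} returns~\eqref{fe8atkt} with $x$ replaced by $1-x$, and $x\mapsto -x/(1-x)$ in~\eqref{fe8atkt} returns~\eqref{k24wp0d}. The substitutions that work are $x\mapsto 1/x$ in~\eqref{k24wp0d} followed by multiplying by $(-x)^n$, or $x\mapsto 1/(1-x)$ in~\eqref{fe8atkt} followed by multiplying by $(1-x)^n$. Since you take the variation as given, nothing downstream is affected.
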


\end{document}